\documentclass{amsart}
\usepackage{a4wide}
\usepackage[utf8]{inputenc}
\usepackage{amsthm,amsfonts,pifont,amsmath,graphicx}
\usepackage[colorlinks,linkcolor=red,anchorcolor=green,citecolor=blue,backref=page]{hyperref}
\usepackage{geometry}
\usepackage[foot]{amsaddr}

\newcommand{\R}{\mathbb{R}}

\newcommand{\X}{\mathcal{X}}

\newtheorem{teo}{Theorem} 
\newtheorem{prop}{Proposition} 
\newtheorem{proposition}{Proposition}
\newtheorem{lem}{Lemma} 

\newtheorem{rem}{Remark}


\title{Controllability of a Semilinear System of Parabolic Equations with Nonlocal Terms}

\author{Juan Límaco}
\address[A1]{Universidade Federal Fluminense, CEP 24210-201, Niterói- RJ, Brasil}
\email[A1]{jlimaco@id.uff.br}

\author{Rafael Martins Lobosco}
\address[A2]{Instituto Federal de Educação, Ciência e Tecnologia do Rio de Janeiro, CEP 24425-000, São Gonçalo-RJ, Brasil}
\email[A2]{rafael.lobosco@ifrj.edu.br}

\author{Luis P. Yapu}
\address[A3]{Universidade Federal Fluminense, CEP 24210-201, Niterói- RJ, Brasil and
Friedrich-Alexander Universität Erlangen-Nürnberg (FAU), Chair of Dynamics, Control, Machine Learning and Numerics, Erlangen, Germany
}




\begin{document}

\subjclass[2020]{35K20, 35R09, 93B05, 93C20}
\keywords{Generalized Black--Scholes models; controllability; nonlocal coupled parabolic systems; Kakutani fixed-point theorem; semilinear PDEs}

\begin{abstract}
This paper extends our previous controllability results for a class of coupled linear parabolic systems with nonlocal interactions, motivated by applications in finance such as generalized Black--Scholes models. We establish local null controllability at a fixed time $T>0$ for a class of semilinear, nonlocally coupled systems driven by a single internal control acting on one component. The proof combines Kakutani's fixed-point theorem with a controllability/observability estimate for the associated linearized dynamics. In addition, we obtain controllability for a broader class of linear systems than those considered in the first article. The paper concludes with remarks on boundary controllability within the same nonlocal framework and with perspectives for future research.
\end{abstract}

\maketitle

\section{Introduction}
\label{sec:intro}

For any $T>0$, let us consider the spacial domain $\Omega = (0,1)$ and $Q = (0,1) \times (0,T)$. We study the null controllability of the following systems of semilinearly coupled equations with kernel terms,
\begin{equation} \label{sistema0}
\begin{cases}
y_t=a_1  y_{xx}+b_1 y_x + c_1 y +\lambda_1(t)\int_0^1 J_1(\zeta-x)y(\zeta,t) d\zeta + F(y,z) + \nu 1_\omega, \ \ \mbox{ in } Q,\\
z_t=a_2  z_{xx}+b_2 z_x + c_2 z +\lambda_2(t)\int_0^1 J_2(\zeta-x)z(\zeta,t)d\zeta+ G(y,z), \ \ \mbox{ in } Q,\\
y(0,t) = y(1,t) = z(0,t) = z(1,t)  = 0 ,\ \ 
\mbox{ for } 0<t<T,\\
y(x,0) = y_0(x), \  z(x,0) = z_0(x), \mbox{ in }  \Omega.
\end{cases}
\end{equation}
where $\nu$ is a control acting on the open set $\omega \times (0,T)$ with $ \omega \subset (0,1)$, $y$ and $z$ are state functions, and, for $i=1,2$, $\lambda_i(t)$ are time dependent functions. In \eqref{sistema0} $a_i$, $b_i$ and $c_i$ are real constants such that $a_i >0$, for $i = 1,2$. The precise hypothesis about the functions $F$ and $G$ will be stated in our main theorem.

The present system \eqref{sistema0} aims to analyze the null controllability within a broader class of equations that extends the model below considered in the previous article of the authors \cite{Limaco-Lobosco-Yapu_24}: 

\begin{equation} \label{sistema_anterior}
\begin{cases}
u_t=a_1  u_{xx}+b_1 u_x + c_1 u +\lambda_1\int_0^1 J_1(z-x)u(z,t) dz+\\
\qquad \qquad \qquad  \qquad \qquad \qquad \quad  \qquad \qquad \qquad  + q_{11}u+q_{12}v + \nu 1_\omega, \ \ \mbox{ in } Q,\\
v_t=a_2  v_{xx}+b_2v_x +c_2v +\lambda_2\int_0^1J_2(z-x)v(z,t)dz+q_{21}u+q_{22}v, \ \ \mbox{ in } Q,\\
u(0,t) = u(1,t)=v(0,t) = v(1,t)  = 0 ,\ \ 
\mbox{ for } 0<t<T,\\
u(x,0) = u_0(x), \  v(x,0) = v_0(x), \mbox{ in }  \Omega,
\end{cases}
\end{equation}
where $\nu$ is a control acting on the open set $\omega \times (0,T)$, $u$ and $v$ are state functions, $\lambda_i(t)$ are time dependent functions with $i=1,2$ and $q_{ij}$, $a_i$, $b_i$ and $c_i$, with $i,j = 1,2$, are real constants such that $a_i >0$;
$q_{ij}$ are transition rates; $q_{ij}\geq 0$ if $i \neq j$; and  $q_{i1}+q_{i2}=0$. 

Problems of the type \eqref{sistema_anterior} were studied, for instance, when the control $\nu$ is not considered and 
$$J_i(x) = \lambda_i C_i e^{-\frac{(x-d_i)^2}{2k_i^2}},$$
with $\lambda_i$, $C_i$, $d_i$ and $k_i$ are constants. In that case, we have the model proposed by \cite{acoplado} for pricing of European, American and Butterfly options whose asset price dynamics follow the regime switching jump diffusion process. This model generalized the classical work \cite{BS} where the Black-Scholes equation was established for derivative pricing of European options. Due to the complexity of financial markets, that generalization became necessary in order to have the ability of efficiently interpret the economic cycles and the changes in the financial time series data due to the regime shifts, as was analysed in \cite{acoplado}. For other financial options pricing models, see the articles \cite{BS2, BS7, BS14, FM1}. 

Concerning partial differential equations with nonlocal terms (kernels) there are controllability results for one partial differential equation. In fact, for the linear equation
\begin{equation*} 
\begin{cases}
u_t= u_{xx} + \int_0^1 K(x,z,t)u(z,t) dz + \nu 1_\omega, \ \ \mbox{ in } Q,\\
u(0,t) = u(1,t)= 0 ,\ \ \mbox{ for } 0<t<T,\\
u(x,0) = u_0(x), \mbox{ in }  \Omega,
\end{cases}
\end{equation*}
it is known that the equation is null controllable when the nonlocal term is independent of time and analytic \cite{fernandez2016null}, see also \cite{lissy2018internal} for a coupled system. For a kernel independent of time and with separable variables, the controllability was established in \cite{micu2018local}. For one equation with kernel depending on time,  Biccari--Hernandez-Santamaría \cite{biccari2019} showed the null controllability supposing an exponential decay at the ends of the interval $[0,T]$ such as the condition  \eqref{H_0} below.  
With the same decay condition, and the smallness condition \eqref{H2}, the authors showed in \cite{Limaco-Lobosco-Yapu_24} the controllability of a coupled linear system.
On the other hand, Nina-Huaman and the first author showed in \cite{Limaco-Dany-25} a Carleman inequality and null controllability of a parabolic PDE whose linearization about a trajectory has a non-local integral term, but with kernel $K$ identically 1. In this case no decay condition is needed.  

In the present article, we investigate whether the controllability properties established for the original system remain valid under the introduction of additional terms $F$ and $G$, thereby providing a natural generalization of the earlier results. In fact, the conditions imposed in the funcions $F$ and $G$ for the system \eqref{sistema0} permit to include the terms of system \eqref{sistema_anterior} that do not appear explicitly in system \eqref{sistema0} and to introduce nonlinearities. 

We consider the case where the functions $J_i\in L^{\infty}(-1,1)$ and $\lambda_i\in L^{\infty}(0,T)$ satisfy the following condition of exponential decay at the extremity points of the interval $[0,T]$:
\begin{equation} \label{H_0}
\overline{K}=\sup_{t \in  [0,T]} exp\left(\frac{2\sigma^-}{t(T-t)} \right)\lambda^2_i(t)\int_{-1}^1 \vert J_i(z) \vert^2 dz < + \infty 
\end{equation}
for $i=1,2$, where $\sigma^-$ is a positive constant that will be defined later.   

In order to get null controllability with only one control, we need the following additional hypothesis on the decay of the first kernel:
\begin{equation} \label{H2}
\sup_{(x,t) \in [0,1]\times [0,T]} exp\left(2s \alpha^{-}(t) \right)\lambda_1^2(t)\int_{0}^1 \vert J_1(x-z) \vert^2 dz < \overline{\delta},
\end{equation}
for $\overline{\delta}>0$ which will be chosen small enough and $\alpha^{-}$ will be defined later.

Our main result is:
\begin{teo}\label{maintheorem0}
Let $T>0$. Suppose that hypothesis \eqref{H_0} and \eqref{H2} are satisfied. Then there exists $\delta = \delta(T)>0$ such that, for any initial data $y_0, z_0 \in L^2(\Omega)$ satisfying 
\begin{equation}
\|y_0\|_{L^2(\Omega)} + \|z_0\|_{L^2(\Omega)} < \delta,
\end{equation}
and for any $F(r,s), G(r,s) : \R \times \R \to \R$, $C^1$-functions fulfilling
\begin{equation}\label{BoundM}
\max \left\{ \left|\frac{\partial F}{\partial r} (r,s)\right|, \left|\frac{\partial F}{\partial s} (r,s)\right|,  \left|\frac{\partial G}{\partial r} (r,s)\right|, \left|\frac{\partial G}{\partial s} (r,s)\right|
\right\} \leq M,
\end{equation}
for all $(r,s) \in \R^2$, with $\frac{\partial G}{\partial r} (r,s) \neq 0$ in \eqref{sistema0}, $F(0,0)=0$, $G(0,0)=0$, then there exists a control $\nu \in L^2(\omega \times (0,T))$ such that the solution of the system \eqref{sistema0} verifies $y(x,T)=0$ and $z(x,T)=0$.
\end{teo}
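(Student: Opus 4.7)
The plan is to reduce the semilinear problem to a family of linear controllability problems via a fixed-point argument based on Kakutani's theorem. For any fixed pair $(\hat y,\hat z)$ in a suitable function space, I would first use $F(0,0)=G(0,0)=0$ together with the uniform bound \eqref{BoundM} to write, via the fundamental theorem of calculus along the segment from $(0,0)$ to $(\hat y,\hat z)$,
\begin{align*}
F(\hat y,\hat z) &= \alpha_{11}(\hat y,\hat z)\,\hat y + \alpha_{12}(\hat y,\hat z)\,\hat z,\\
G(\hat y,\hat z) &= \alpha_{21}(\hat y,\hat z)\,\hat y + \alpha_{22}(\hat y,\hat z)\,\hat z,
\end{align*}
where each $\alpha_{ij}$ is measurable and bounded pointwise by $M$. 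The hypothesis $\partial G/\partial r(r,s)\neq 0$ guarantees that the cross-coupling coefficient $\alpha_{21}$ is non-degenerate, which is essential for steering $z$ by means of the single control acting on the $y$-equation.

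With these coefficients frozen, I would consider the linear nonlocal system
\begin{equation*}
\begin{cases}
y_t = a_1 y_{xx} + b_1 y_x + (c_1+\alpha_{11}) y + \alpha_{12} z + \lambda_1(t)\int_0^1 J_1(\zeta-x)\,y\,d\zeta + \nu 1_\omega,\\
z_t = a_2 z_{xx} + b_2 z_x + (c_2+\alpha_{22}) z + \alpha_{21} y + \lambda_2(t)\int_0^1 J_2(\zeta-x)\,z\,d\zeta,
\end{cases}
\end{equation*}
and invoke the generalized linear null-controllability result the authors announce in the introduction, which extends \cite{Limaco-Lobosco-Yapu_24} to bounded variable coupling coefficients. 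Under the decay \eqref{H_0} and the smallness \eqref{H2}, this produces for each $(\hat y,\hat z)$ a control $\nu$ and a trajectory $(y,z)$ satisfying $y(\cdot,T)=z(\cdot,T)=0$, together with an observability-type estimate
\begin{equation*}
\|\nu\|_{L^2(\omega\times(0,T))} + \|(y,z)\|_{\X} \leq C(M,T)\bigl(\|y_0\|_{L^2} + \|z_0\|_{L^2}\bigr),
\end{equation*}
where $\X$ is an appropriate parabolic energy space such as $L^2(0,T;H^1_0(\Omega))\cap C([0,T];L^2(\Omega))$ for each component, and the constant $C(M,T)$ depends on the coefficients only through the bound $M$.

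I would then define the set-valued map $\Lambda(\hat y,\hat z)$ consisting of all such controlled trajectories and verify the Kakutani hypotheses on a closed convex ball $B_R\subset\X$ of radius $R=C(M,T)\delta$: choosing $\delta$ small enough forces $\Lambda(B_R)\subset B_R$; linearity of the controlled system in $(\nu,y,z)$ yields convex images; parabolic regularity together with the Aubin--Lions compact embedding gives relative compactness of $\Lambda(B_R)$; and the continuity of the $\alpha_{ij}$ with respect to $(\hat y,\hat z)$, combined with weak-strong passage to the limit in the state equation and in the nonlocal terms, yields the closed-graph property. A fixed point of $\Lambda$ is then a solution of \eqref{sistema0} driven to rest at time $T$ by its associated control.

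The principal obstacle is obtaining the linear controllability estimate with a constant $C(M,T)$ that is \emph{uniform} over all admissible choices $\alpha_{ij}\in L^\infty$ with $\|\alpha_{ij}\|_\infty\le M$: the Carleman/observability machinery for the nonlocal adjoint system must absorb these lower-order but potentially adversarial perturbations, which is precisely where the exponential decay \eqref{H_0} of $\lambda_i J_i$ at the endpoints and the smallness \eqref{H2} of $J_1$ against the Carleman weight $\alpha^-$ become indispensable. A secondary difficulty is the verification of upper semicontinuity of $\Lambda$: given $(\hat y_n,\hat z_n)\to(\hat y,\hat z)$ in $\X$ with $(y_n,z_n)\in\Lambda(\hat y_n,\hat z_n)$, one must extract a weakly convergent subsequence of the controls $\nu_n$ and pass to the limit both in the integral kernels and in the coefficients $\alpha_{ij}(\hat y_n,\hat z_n)$, the latter converging a.e.\ thanks to the $C^1$-regularity of $F,G$ and the compact embedding.
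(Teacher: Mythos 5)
Your proposal follows essentially the same route as the paper: the fundamental-theorem-of-calculus decomposition of $F$ and $G$ into bounded coefficients, the frozen-coefficient linear system controlled via a Carleman/observability argument with a constant uniform in $M$, and a Kakutani fixed-point argument on a ball in a parabolic energy space using Aubin--Lions compactness and a closed-graph verification through a.e.\ convergence of the coefficients. You also correctly single out the two genuine difficulties (uniformity of the control-cost constant over all admissible frozen coefficients, and the upper semicontinuity of the set-valued map), which are precisely the points the paper treats in detail.
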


This paper is organized as follows. In Section~\ref{sec:linear_case0} we formulate the linear problem associated with system~\eqref{sistema0}. This system differs from that in~\cite{Limaco-Lobosco-Yapu_24} as its coefficients are variable rather than constant, and the arguments from the earlier article are adapted here in detail. In Section~\ref{sec:proof_nonlinear0} we prove the main result, Theorem~\ref{maintheorem0}, for our system~\eqref{sistema0}, via Kakutani’s fixed-point theorem. In Section~\ref{sec:add_coments} we present related problems and additional remarks.

Throughout the paper, we use \(C\) and \(C_i\) (\(i=1,2,\ldots\)) to denote generic positive constants whose value may change from line to line. When it is necessary to emphasize parameter dependence, we indicate it explicitly; for example, we write \(C=C(\omega,T)\) to mean that \(C\) depends only on \(\omega\) and \(T\).

\section{Analysis of the Controllability of the Linearized System of (1)}
\label{sec:linear_case0}

Considering the hypotheses of the Theorem \ref{maintheorem0}, we have $F(0,0)=0$ and $G(0,0)=0$, then
\begin{equation}
\label{eq:decomp_F_G}
F(r,s) = a(r,s)r + b(r,s)s, \qquad
G(r,s)=c(r,s)r + d(r,s)s,    
\end{equation}
where 
$$
a(r,s) = \int_0^1 \frac{\partial F}{\partial r}(\kappa r,\kappa s) d\kappa, \quad b(r,s) = \int_0^1 \frac{\partial F}{\partial s}(\kappa r,\kappa s) d\kappa,
$$
$$
c(r,s) = \int_0^1 \frac{\partial G}{\partial r}(\kappa r,\kappa s) d\kappa, \qquad d(r,s) = \int_0^1 \frac{\partial G}{\partial s}(\kappa r,\kappa s) d\kappa.
$$

Taking into account the hypothesis \eqref{BoundM}, we have
$$
|a(r,s)| \leq \int_0^1 \left|\frac{\partial F}{\partial r}(\kappa r, \kappa s)\right| d\kappa \leq \int_0^1 M d\kappa = M,
$$
analogously $|b(r,s)| \leq M$, $|c(r,s)| \leq M$ and $|d(r,s)| \leq M$.

We denote $\X = L^2(Q)\times L^2(Q)$. 
For $(\bar y, \bar z) \in \X$, let us consider the solutions $(y,z)$ of the auxiliary linear system
\begin{equation}\label{eq:linearized_fixed_point}
\begin{cases}
y_t=a_1  y_{xx}+b_1 y_x + c_1 y +\lambda_1\int_0^1 J_1(\zeta-x)y(\zeta,t) d\zeta + \tilde a(x,t)y + \tilde b(x,t)z + \nu 1_\omega, \ \ \mbox{ in } Q,\\
z_t=a_2  z_{xx}+b_2 z_x +c_2z +\lambda_2\int_0^1J_2(\zeta-x)z(\zeta,t)d\zeta+ \tilde c(x,t)y + \tilde d(x,t)z, \ \ \mbox{ in } Q,\\
y(0,t) = y(1,t)=z(0,t) = z(1,t)  = 0 ,\ \ 
\mbox{ for } 0<t<T,\\
y(x,0) = y_0(x), z(x,0) = z_0(x), \mbox{ in }  \Omega,
\end{cases}
\end{equation}
where the coefficients are defined by
\begin{equation*}
\begin{split}
\tilde a(x,t) = a(\bar y(x,t),\bar z(x,t)), \qquad \tilde b(x,t) = b(\bar y(x,t),\bar z(x,t)), \\
\tilde c(x,t) = c(\bar y(x,t),\bar z(x,t)), \qquad \tilde d(x,t) = d(\bar y(x,t),\bar z(x,t)).
\end{split}
\end{equation*}

Thus,
\begin{equation*}
\begin{split}
|\tilde a(x,t)| = |a(\bar y(x,t),\bar z(x,t))| \leq M, \qquad |\tilde b(x,t)| = |b(\bar y(x,t),\bar z(x,t))| \leq M, \\
|\tilde c(x,t)| = |c(\bar y(x,t),\bar z(x,t))| \leq M, \qquad |\tilde d(x,t)| = |d(\bar y(x,t),\bar z(x,t))| \leq M.
\end{split}
\end{equation*}

Under the decay hypothesis \eqref{H_0} and \eqref{H2}, supposing a non-vanishing condition on 
$c(r,s)$, we recall the following controllability result for the linear system:   

\begin{teo}\label{theorem_linear} 
If the hypothesis \eqref{H_0} and \eqref{H2} are satisfied, $\tilde c \in W^{2,\infty}(Q)$ with $\tilde c(x,t) \neq 0$ in $\bar \omega \times (0,T)$, then there exists a control $\nu \in L^2(\omega \times (0,T))$ such that the solution of the system \eqref{eq:linearized_fixed_point} verifies $y(x,T)=z(x,T)=0$ and
 for a constant $C=C(M,\omega,T)$ we have an estimate for the control of the form
\begin{equation}\label{ControlEstimate}
\|\nu\|_{L^2(\omega\times(0,T))} \leq C (\|y_0\|_{L^2(\Omega)} + \|z_0\|_{L^2(\Omega)}).    
\end{equation}
\end{teo}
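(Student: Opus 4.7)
The plan is to use duality: by the classical HUM argument, the existence of a control $\nu \in L^2(\omega\times(0,T))$ driving $(y,z)$ to zero at time $T$ with the cost estimate \eqref{ControlEstimate} is equivalent to the observability inequality
\begin{equation*}
\|\varphi(0)\|_{L^2(\Omega)}^2 + \|\psi(0)\|_{L^2(\Omega)}^2 \;\leq\; C \iint_{\omega\times(0,T)} \varphi^2\,dx\,dt
\end{equation*}
for every solution $(\varphi,\psi)$ of the backward adjoint system
\begin{equation*}
\begin{cases}
-\varphi_t = a_1\varphi_{xx} - b_1\varphi_x + c_1\varphi + \lambda_1(t)\!\int_0^1 J_1(x-\zeta)\varphi(\zeta,t)d\zeta + \tilde a\varphi + \tilde c\psi, \\[2pt]
-\psi_t = a_2\psi_{xx} - b_2\psi_x + c_2\psi + \lambda_2(t)\!\int_0^1 J_2(x-\zeta)\psi(\zeta,t)d\zeta + \tilde b\varphi + \tilde d\psi,
\end{cases}
\end{equation*}
with homogeneous Dirichlet boundary conditions and final data at $t=T$. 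The whole task thus reduces to proving this observability estimate.

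Following the blueprint of \cite{Limaco-Lobosco-Yapu_24}, adapted here to variable $L^\infty$ coefficients $\tilde a,\tilde b,\tilde c,\tilde d$, I would first establish a global Fursikov--Imanuvilov Carleman estimate for the adjoint system with the standard weights $e^{-2s\alpha}\xi^k$. Applying the scalar Carleman inequality to each of the two parabolic equations and summing, the reaction and coupling terms $\tilde a\varphi,\tilde b\varphi,\tilde c\psi,\tilde d\psi$ are absorbed on the left-hand side for $s$ large, using only their uniform $L^\infty$-bound $M$. The nonlocal integrals are controlled by Cauchy--Schwarz together with \eqref{H_0}: the exponential bound on $\lambda_i^2\|J_i\|^2$ near the endpoints of $[0,T]$ is precisely calibrated to match the singular growth of $e^{-2s\alpha}$, so their contributions are absorbed as well. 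The outcome is a Carleman estimate whose right-hand side contains only local integrals of $\varphi^2$ and $\psi^2$ on $\omega\times(0,T)$.

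The delicate step is to eliminate the local observation of $\psi$ using the hypothesis $\tilde c\neq 0$ on $\bar\omega\times(0,T)$. Pick $\omega'$ with $\overline{\omega'}\subset\omega$ and a cutoff $\eta\in C_c^\infty(\omega)$ equal to $1$ on $\omega'$. From the first adjoint equation,
\begin{equation*}
\psi = \tilde c^{-1}\Bigl(-\varphi_t - a_1\varphi_{xx} + b_1\varphi_x - c_1\varphi - \lambda_1(t)\!\int_0^1 J_1(x-\zeta)\varphi(\zeta,t)d\zeta - \tilde a\varphi\Bigr)
\end{equation*}
on $\omega\times(0,T)$. Multiplying by $\eta^2 s^p\xi^p e^{-2s\alpha}\psi$, with $p$ chosen so the powers of $s,\xi$ close up, and integrating by parts twice in $x$ and once in $t$ to transfer derivatives from $\varphi$ onto the smooth factors $\eta^2/\tilde c$ and $\psi$, one bounds the weighted $\psi^2$ integral on $\omega'$ by a weighted $\varphi^2$ integral on $\omega$ plus lower-order $\psi$ terms reabsorbable by the $\psi$-Carleman norm and a nonlocal residual. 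The regularity $\tilde c\in W^{2,\infty}(Q)$ is used precisely to afford this double integration by parts, and the smallness hypothesis \eqref{H2} is exactly what renders the residual subdominant. Substituting into the Carleman estimate from the previous paragraph yields a global Carleman estimate with a single observation of $\varphi$, which upgrades to the desired observability via a standard energy estimate on $[0,T/2]$.

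The main obstacle I anticipate lies in the bookkeeping of powers of $s$ and $\xi$ in the elimination step: one must verify that each transferred derivative produces only $\psi$-terms strictly weaker in the Carleman scale than the leading $\psi$-term on the left, so that absorption is legitimate. A secondary but nontrivial point is choosing the parameters $s$, $\sigma^-$, and $\overline\delta$ jointly so that \eqref{H_0} (for the global Carleman estimate) and \eqref{H2} (for the elimination of the $\psi$-observation) are compatible with the constants produced by the Carleman machinery; the passage from constant coefficients in \cite{Limaco-Lobosco-Yapu_24} to variable $L^\infty$ coefficients here makes this balance more delicate, but introduces no structurally new difficulty.
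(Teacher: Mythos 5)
Your proposal is correct and follows essentially the same route as the paper: a two-observation Carleman estimate for the adjoint system (absorbing the bounded coupling terms and using \eqref{H_0} for the nonlocal terms), elimination of the local $\psi$-observation by solving for $\psi$ from the first adjoint equation using $\tilde c\neq 0$ and $\tilde c\in W^{2,\infty}$, with \eqref{H2} controlling the nonlocal residual, followed by observability and duality. The only cosmetic difference is that the paper carries out the observability-to-controllability step explicitly via a penalized functional $\mathcal{J}_\varepsilon$ and a limit $\varepsilon\to 0$ (precisely to exhibit the uniform cost estimate \eqref{ControlEstimate} needed later for the fixed point), whereas you invoke the HUM equivalence directly.
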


Theorem \ref{theorem_linear} was already proved in \cite{Limaco-Lobosco-Yapu_24} for the case in which the coefficients of the system are constant. That proof omits the explicit control estimate \eqref{ControlEstimate}, which is indispensable for the fixed-point argument. In our setting, the hypotheses on $F$ and $G$ imply that some coefficients of the linearized problem \eqref{eq:linearized_fixed_point} are bounded functions rather than constants, but the argument is very similar except for one extra step (we must establish the uniform bound \eqref{ControlEstimate}) and some adaptations. We present the general demonstration with the extra details, but omitting those calculations that are very similar.

The main tool for the proof of Theorem \ref{theorem_linear} is a Carleman estimate for the adjoint system,
\begin{equation} \label{sistema_adj0}
\begin{cases}
-\phi_t=a_1  \phi_{xx}-b_1 \phi_x + c_1\phi +\lambda_1\int_0^1J_1(x-z)\phi(z,t)dz+\\
\qquad \qquad \qquad  \qquad \qquad \qquad \quad  \qquad \qquad \qquad  + \tilde a(x,t)\phi + \tilde c(x,t) \psi, \ \ \mbox{ in }  Q,\\
-\psi_t=a_2  \psi_{xx}-b_2\psi_x +c_2\psi +\lambda_2\int_0^1J_2(x-z)\psi(z,t)dz+\\
\qquad \qquad \qquad  \qquad \qquad \qquad \quad  \qquad \qquad \qquad  +\tilde b(x,t)\phi + \tilde d(x,t)\psi, \ \ \mbox{ in } Q,\\
\phi(0,t) = \phi(1,t)=\psi(0,t) = \psi(1,t)  = 0 ,\ \ \mbox{ in } 0<t<T,\\
\phi(x,T) = \phi_T(x), \psi(x,T) = \psi_T(x), \mbox{ in }  \Omega. 
\end{cases}
\end{equation}

This estimate allows us to establish an observability inequality. Then, we prove the null controllability of \eqref{eq:linearized_fixed_point} by using the observability.

See more details about Carleman estimate in the classical works of Fursikov and Imanuvilov \cite{Car12} and Fernández-Cara and Guerrero \cite{GlobalCarleman}, and specifically for our case in
\cite{Limaco-Lobosco-Yapu_24} and \cite{biccari2019}.

We need the following lemma of Fursikov and Imanuvilov \cite{Car12}.

\begin{lem}
Let $\omega \subset \subset (0,1)$ be a non empty open set. Then there exists a function $\eta_0 \in C^2([0,1])$ such that $\eta_0 > 0$ in the interval $(0,1)$, with $\eta_0 = 0$ at the boundary $\{0\} \cup \{1\}$ and $\vert (\eta_0)_x \vert >0$ in $[0,1]\setminus \omega$. 
\end{lem}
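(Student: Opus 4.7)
The plan is to construct $\eta_0$ explicitly as a ``warped'' paraboloid whose unique critical point has been pushed into $\omega$. Since $\omega \subset\subset (0,1)$ is nonempty and open, I would first fix any point $p \in \omega$. The model function $x(1-x)$ vanishes at both endpoints, is strictly positive on $(0,1)$, and has its unique critical point at $x=1/2$; composing it with a suitable orientation-preserving change of variable sending $p$ to $1/2$ preserves these qualitative features while relocating the critical point into $\omega$.

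Concretely, the construction proceeds in two steps. First, I would build a $C^\infty$ diffeomorphism $\tau:[0,1]\to[0,1]$ with $\tau(0)=0$, $\tau(1)=1$, $\tau(p)=1/2$, and $\tau'(x)>0$ throughout $[0,1]$. A clean recipe is
\[
\tau(x) = \frac{\int_0^x \sigma(s)\,ds}{\int_0^1 \sigma(s)\,ds},
\]
where $\sigma \in C^\infty([0,1])$ is strictly positive with $\int_0^p \sigma = \tfrac{1}{2}\int_0^1 \sigma$; such a $\sigma$ is obtained by mollifying a piecewise constant density weighted appropriately on $[0,p]$ versus $[p,1]$. Second, I would set
\[
\eta_0(x) := \tau(x)\bigl(1-\tau(x)\bigr).
\]

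Verification is then essentially immediate. Since $\tau \in C^\infty$, we have $\eta_0 \in C^\infty([0,1])\subset C^2([0,1])$. From $\tau(0)=0$ and $\tau(1)=1$ one gets $\eta_0(0)=\eta_0(1)=0$, while $\tau((0,1))=(0,1)$ forces $\eta_0>0$ on $(0,1)$. Differentiating,
\[
\eta_0'(x) = \tau'(x)\bigl(1 - 2\tau(x)\bigr),
\]
and since $\tau'>0$ everywhere, the zero set of $\eta_0'$ coincides with $\{\,x\in[0,1]:\tau(x)=1/2\,\}=\{p\}$. Because $p\in\omega$, this yields $|\eta_0'(x)|>0$ for every $x\in[0,1]\setminus\omega$, as required.

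The only step requiring any care is the construction of the diffeomorphism $\tau$ with the prescribed midpoint value $\tau(p)=1/2$, $C^\infty$ regularity, and strict positivity of $\tau'$; this is routine via the density-mollification recipe above. I do not anticipate any deeper obstacle, the one-dimensional setting making the argument much lighter than the Morse-theoretic construction in arbitrary dimension found in \cite{Car12}.
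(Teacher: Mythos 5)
Your construction is correct, and it is worth noting that the paper itself offers no proof of this lemma: it is stated as a citation of Fursikov--Imanuvilov \cite{Car12}, where the general $n$-dimensional version is established by a Morse-theoretic argument (perturbing a function to have only nondegenerate critical points and then pushing them into $\omega$ along a flow). Your one-dimensional argument is genuinely more elementary and self-contained: fixing $p\in\omega$, building a strictly increasing $C^\infty$ reparametrization $\tau$ of $[0,1]$ with $\tau(p)=1/2$ via the normalized primitive of a positive density $\sigma$ with half its mass on $[0,p]$, and setting $\eta_0=\tau(1-\tau)$ yields $\eta_0'=\tau'(1-2\tau)$, whose zero set is exactly $\{p\}\subset\omega$; all the required properties follow. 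The only detail left implicit, the existence of such a $\sigma$, is indeed routine (e.g.\ $\sigma=1+c\varphi$ with $\varphi$ a bump supported on the appropriate side of $p$ and $c$ fixed by the mass condition). What your approach buys is a short explicit proof valid in dimension one, at the cost of not generalizing to higher dimensions, where the connectedness/Morse argument of \cite{Car12} is genuinely needed; since this paper works exclusively on $\Omega=(0,1)$, your version would suffice for its purposes.
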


For some parameter $\kappa >0$, we define 
$$\sigma(x) = e^{4 \kappa \parallel \eta_0 \parallel_{\infty} } - e^{ \kappa (2\parallel \eta_0 \parallel_{\infty}+\eta_0(x))},$$ 
and the weight functions
$$ \alpha (x,t) = \frac{\sigma(x)}{t(T-t)} \quad \text{and} \quad \xi (x,t) = \frac{e^{\kappa (2\parallel \eta_0 \parallel_{\infty}+\eta_0(x))}}{t(T-t)}.$$

We also use the notations
\begin{equation}
\label{eq:alpha_sigma_mais_menos}
\begin{split}
\sigma^+ = \max_{x\in [0,1]}\sigma(x), \qquad \sigma^- = \min_{x\in [0,1]}\sigma(x), \\
\alpha^+(t) = \max_{x\in [0,1]}\alpha(x,t), \qquad \alpha^-(t) = \min_{x\in [0,1]}\alpha(x,t).
\end{split}
\end{equation}

We use the notation
\begin{align}\label{eq:formula_I}
I(z) := s^{-1}  \int_0^T\int_0^1 e^{-2s\alpha}\xi^{-1} (\vert z_t\vert^2 +\vert z_{xx}\vert^2)dxdt + s \kappa^2 \int_0^T\int_0^1 e^{-2s\alpha}\xi \vert z_x\vert^2dxdt + \nonumber \\ +s^3 \kappa^4 \int_0^T \int_0^1 e^{-2s \alpha}\xi^3 \vert z \vert^2 dx dt,        
\end{align} 
where $\alpha(x,t)$ and $\xi(x,t)$ are the Carleman weights defined above blowing-up at $t=0$ and $t=T$.

In this article we use the following Carleman inequality.

\begin{prop} \label{carleman_um_controle0}
Consider the adjoint system \eqref{sistema_adj0} with $\tilde c \in W^{2,\infty}(Q)$, $\tilde c(x,t) \neq 0$ in $\bar \omega \times (0,T)$  and   $\phi_T \in L^2(\Omega)$, $\psi_T \in L^2(\Omega)$. Moreover assume that the kernels $\lambda_i(t) J_i(x)$, $i=1,2$, satisfy \eqref{H_0} and suppose that the kernel $\lambda_1(t) J_1(x)$ satisfies \eqref{H2}. Then, there exist constants $C=C(\omega,a_1,a_2,b_1,c_1,T,\tilde c, \tilde a)$, $\kappa_1=C(\omega, M, b_1,b_2,c_1,c_2)$ and $s_1=C(\omega)(aT+ (aT)^2+\overline{K}^\frac{2}{3}T^2)$, where $\overline{K}$ was defined in \eqref{H_0} and $a = \max(a_1,a_2)$, such that the solution $\phi$, $\psi$ of the system \eqref{sistema_adj0} satisfy
\begin{equation}\label{Carleman2_coeff33}
I(\phi) + I(\psi) \leq Cs^7 \kappa^8\left(\iint_{\omega \times (0,T)} e^{-2s\alpha}\xi^7 \vert \phi \vert^2dxdt \right),
\end{equation}
for any constants $\kappa>\kappa_1$ and $s>s_1$.

\end{prop}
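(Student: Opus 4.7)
My plan is to prove \eqref{Carleman2_coeff33} by applying the classical scalar Fursikov--Imanuvilov Carleman inequality separately to each equation of \eqref{sistema_adj0}, and then absorbing successively the coupling, nonlocal, and $\psi$-observation terms into the left-hand side. Viewing $\phi$ and $\psi$ as solutions of scalar backward parabolic equations whose right-hand sides carry all the coupling and nonlocal terms, the standard Carleman estimate with weights $\alpha,\xi$ gives
\begin{equation*}
I(\phi)+I(\psi) \leq C\iint_Q e^{-2s\alpha}\bigl(|f_1|^2+|f_2|^2\bigr)dx\,dt + Cs^3\kappa^4\iint_{\omega\times(0,T)} e^{-2s\alpha}\xi^3\bigl(|\phi|^2+|\psi|^2\bigr)dx\,dt,
\end{equation*}
where $f_1,f_2$ collect the terms $\tilde a\phi+\tilde c\psi$, $\tilde b\phi+\tilde d\psi$ together with the two nonlocal integrals. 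The $(a_iT,(a_iT)^2)$ dependence in $s_1$ is the classical one coming from this scalar estimate. Using the uniform bound $M$ on $\tilde a,\tilde b,\tilde c,\tilde d$, the coupling contribution is dominated by $CM^2\iint e^{-2s\alpha}(|\phi|^2+|\psi|^2)dx\,dt$, which is absorbed into the $s^3\kappa^4 e^{-2s\alpha}\xi^3|\cdot|^2$ part of $I(\phi)+I(\psi)$ as soon as $\kappa>\kappa_1(M,b_i,c_i)$ and $s$ is large enough; this accounts for the stated dependence of $\kappa_1$.

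For the two nonlocal integrals, Cauchy--Schwarz gives
\begin{equation*}
\left|\lambda_i(t)\int_0^1 J_i(x-\zeta)\phi(\zeta,t)\,d\zeta\right|^2 \leq \lambda_i^2(t)\,\|J_i\|_{L^2(-1,1)}^2\,\|\phi(\cdot,t)\|_{L^2(0,1)}^2,
\end{equation*}
and hypothesis \eqref{H_0} precisely controls $e^{2\sigma^-/(t(T-t))}\lambda_i^2\int|J_i|^2$ by $\overline K$. Since $\alpha^-(t)=\sigma^-/(t(T-t))$ and $\sigma(x)\geq \sigma^-$, combining with the exponential factor $e^{-2s\alpha}$ and bounding $\|\phi(\cdot,t)\|_{L^2}^2$ by the $\xi^3$ weighted integral of $\phi^2$ makes this contribution a small perturbation of the leading $s^3\kappa^4$ term once $s\gtrsim \overline K^{2/3}T^2$, which is the origin of the corresponding term in $s_1$. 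After these absorptions, the only remaining obstruction to \eqref{Carleman2_coeff33} is the local observation of $\psi$.

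The core difficulty is the elimination of this $\psi$-observation. Here I exploit the hypotheses $\tilde c(x,t)\neq 0$ on $\bar\omega\times(0,T)$ and $\tilde c\in W^{2,\infty}(Q)$, which make $1/\tilde c$ admissible as a $W^{2,\infty}$-multiplier on a neighbourhood of $\bar\omega$. Pick an auxiliary open set $\omega_0$ with $\omega_0\subset\subset\omega$ and a cutoff $\chi\in C^2_c(\omega)$ with $\chi\equiv 1$ on $\omega_0$; the Carleman estimate applied to a shrunken open set reduces the task to controlling $\iint_{\omega}e^{-2s\alpha}\xi^3\chi^2|\psi|^2\,dx\,dt$. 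Solving the first equation of \eqref{sistema_adj0} for $\psi$ on $\omega\times(0,T)$ gives
\begin{equation*}
\tilde c\,\psi = -\phi_t - a_1\phi_{xx} + b_1\phi_x - (c_1+\tilde a)\phi - \lambda_1(t)\!\int_0^1 J_1(x-\zeta)\phi(\zeta,t)\,d\zeta,
\end{equation*}
which I substitute into one factor of that integral, leaving the other $\psi$ in place. Successive integrations by parts in $t$ and twice in $x$ transfer the $\phi$-derivatives onto the weight $e^{-2s\alpha}\xi^3\chi^2\tilde c^{-1}\psi$. Each derivative of $e^{-2s\alpha}\xi^p$ produces a factor of order $s\kappa\xi$, so after three derivatives the worst weight on $|\phi|^2$ obtainable by Young's inequality is of order $s^7\kappa^8 e^{-2s\alpha}\xi^7$; cross terms involving $\psi,\psi_x,\psi_t,\psi_{xx}$ are bounded by $\varepsilon\bigl(I(\phi)+I(\psi)\bigr) + C_\varepsilon s^7\kappa^8\iint_\omega e^{-2s\alpha}\xi^7|\phi|^2\,dx\,dt$. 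The residual contribution from the nonlocal integral $\lambda_1\int J_1\phi$ is controlled using \eqref{H2}: the bound $e^{2s\alpha^-}\lambda_1^2\int|J_1|^2\leq\overline\delta$ allows Cauchy--Schwarz to absorb it into $I(\phi)$ once $\overline\delta$ is small enough.

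I expect this elimination of the $\psi$-observation to be the main obstacle, because the bookkeeping of weights through the integration by parts is delicate: every derivative transferred onto $e^{-2s\alpha}\xi^p$ inflates the powers of $\xi$ and of $s\kappa$, and one must verify that the $\psi$-dependent pieces generated during this process can truly be reabsorbed into $I(\psi)$ rather than amplifying it. The smallness built into \eqref{H2} is precisely what allows the absorption loop against the nonlocal term produced by the same substitution to close; together with the earlier absorption of the couplings and of the global nonlocal terms, this yields the announced constants $C$, $\kappa_1$, $s_1$ and the final estimate \eqref{Carleman2_coeff33}.
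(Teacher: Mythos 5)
Your proposal follows essentially the same route as the paper: first the two-observation Carleman estimate obtained by applying the scalar Fursikov--Imanuvilov inequality to each equation and absorbing the couplings (via the bound $M$) and the nonlocal terms (via \eqref{H_0}, which produces the $\overline{K}^{2/3}T^2$ contribution to $s_1$), then elimination of the $\psi$-observation by solving the first adjoint equation for $\tilde c\,\psi$ on $\omega$, substituting, integrating by parts against the weight $e^{-2s\alpha}\xi^3\chi/\tilde c$, and using \eqref{H2} together with the comparison $\alpha^+<2\alpha^-$ to absorb the resulting nonlocal residual into $I(\phi)$. This matches the paper's decomposition into the terms $M_1,\dots,M_6$ and yields the same final weight $s^7\kappa^8\xi^7$ on the $\phi$-observation.
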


To prove this proposition, we will need some intermediate results.

We have the following Carleman estimate which is straightforwardly adapted from \cite{GlobalCarleman}. For more about Carleman's estimates, we refer to the important works \cite{Car12,Car22,Car32,Car42} and \cite{Car52}.

\begin{prop}[\cite{GlobalCarleman}]\label{prop2.2}
There exist positive constants $C=C(\Omega,\omega,a)$, $s_1  = C(\Omega,\omega)(aT+ (aT)^2)$ and $\kappa_1=C(\Omega,\omega)
$ such that, for any $s>s_1$, $\kappa > \kappa_1$,  $F \in L^2(Q)$, $z_T\in L^2(\Omega)$ and $a>0$ being a positive constant, the solution of the equation 
\begin{equation} \label{sistema4}
\begin{cases}
\displaystyle z_t+a\Delta z  = F, \ \ &\mbox{ in } \ \ 0 < x < 1,\text{ } 0<t<T, \\
z(0,t) = z(1,t) = 0 ,\ \ &\mbox{ in } 0<t<T,\\
\displaystyle z(x,T) = z_T(x), &\mbox{ in }  0 < x < 1,
\end{cases}
\end{equation}
satisfy
\begin{equation}\label{Carleman11}
I(z)\leq C \left[ s^3 \kappa^4 \iint_{\omega\times(0,T)}e^{-2s\alpha}\xi^3 \vert z \vert^2 dxdt + \int_0^T \int_0^1 e^{-2s\alpha} \vert F \vert^2 dx dt \right].
\end{equation}
\end{prop}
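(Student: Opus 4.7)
The plan is to follow the classical Fursikov--Imanuvilov conjugation-and-split scheme, carefully tracking how the diffusion coefficient $a$ enters the parameter thresholds so as to reproduce the announced dependence $s_1=C(\Omega,\omega)(aT+(aT)^2)$.

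First I would introduce $\psi(x,t)=e^{-s\alpha(x,t)}z(x,t)$ and compute the conjugated equation. Using $\alpha_x=-\kappa\xi(\eta_0)_x$ and $\alpha_{xx}=-\kappa^2\xi(\eta_0)_x^2-\kappa\xi(\eta_0)_{xx}$, together with the pointwise bounds $|\alpha_t|\le CT\xi^2$ and $|\alpha_{tt}|\le CT^2\xi^3$, the conjugated operator splits as $M_1\psi+M_2\psi=g_s+R\psi$ with $g_s=e^{-s\alpha}F$, where
\[
M_1\psi = a\psi_{xx}+s^2a\alpha_x^2\psi-s\alpha_t\psi,\qquad M_2\psi = \psi_t-2sa\alpha_x\psi_x-2sa\alpha_{xx}\psi,
\]
and $R\psi$ collects remainders of strictly lower order in $s,\kappa,\xi$. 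Since $\alpha\to+\infty$ as $t\to 0^+$ or $t\to T^-$, the function $\psi$ and its derivatives vanish at the temporal endpoints, and the Dirichlet condition $z(0,t)=z(1,t)=0$ yields $\psi(0,t)=\psi(1,t)=0$, so every boundary integral appearing below will vanish.

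Second, I would apply the $L^2(Q)$ norm to the split identity to obtain $\|M_1\psi\|^2+\|M_2\psi\|^2+2(M_1\psi,M_2\psi)_{L^2(Q)}=\|g_s+R\psi\|^2$, and expand the cross term through repeated integration by parts in $x$ and $t$. The leading positive contributions are $s\kappa^2\iint_Q e^{-2s\alpha}\xi|\psi_x|^2\,dx\,dt$ and $s^3\kappa^4\iint_Q e^{-2s\alpha}\xi^3|\psi|^2\,dx\,dt$, modulo a local term supported on $\omega'\subset\subset\omega$ (the subset on which $|(\eta_0)_x|$ may vanish) that is moved to the right-hand side. The error contributions coming from $s\alpha_{tt}$, $s^2\alpha_t\alpha_x^2$ and $sa\alpha_{xxx}$ are dominated by these two terms provided $\kappa\ge\kappa_1=C(\Omega,\omega)$ and $s\ge s_1=C(\Omega,\omega)(aT+(aT)^2)$. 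Inverting $M_1\psi$ for $a\psi_{xx}$ and $M_2\psi$ for $\psi_t$ then adds $s^{-1}\iint_Q e^{-2s\alpha}\xi^{-1}(|\psi_t|^2+|\psi_{xx}|^2)\,dx\,dt$ to the left-hand side, producing a quantity equivalent to $I(\psi)$.

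Third, a standard cutoff argument replaces the local term and I unconjugate. Picking $\chi\in C_c^\infty(\omega)$ with $\chi\equiv 1$ on $\omega'$ and testing the $z$-equation against $s^3\kappa^4\chi\,e^{-2s\alpha}\xi^3 z$, two integrations by parts trade $\iint_{\omega'\times(0,T)} e^{-2s\alpha}\xi|\psi_x|^2$ for the desired $s^3\kappa^4\iint_{\omega\times(0,T)} e^{-2s\alpha}\xi^3|z|^2$, modulo a contribution controlled by $\iint_Q e^{-2s\alpha}|F|^2$. Unconjugating through $z=e^{s\alpha}\psi$ and handling the chain-rule factors $s\xi,s^2\xi^2$ converts $I(\psi)$ into $I(z)$, yielding the stated estimate. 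The main obstacle is the cross-term computation of the second step: one must book-keep every error coming from $\alpha_t,\alpha_{tt}$ and from the diffusive corrections $a\alpha_{xxx},a\alpha_{xx}$, and verify that each is absorbed by $s^3\kappa^4\xi^3$ or $s\kappa^2\xi$ once $\kappa\ge\kappa_1$ and $s\ge s_1$. Since this computation is fully carried out in \cite{GlobalCarleman} for $a=1$, the task reduces to verifying that each term scales appropriately in $a$, from which the threshold $aT+(aT)^2$ naturally emerges.
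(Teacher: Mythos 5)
Your plan is a genuinely different route from the paper's. The paper does not redo the Fursikov--Imanuvilov computation at all: it performs the change of time variable $\tau=at$, $w(x,\tau)=z(x,t)$, which turns $z_t+a\Delta z=F$ into the unit-diffusion equation $w_\tau+\Delta w=\frac1a F(x,\tau/a)$ on the time interval $(0,aT)$, applies the known Carleman inequality of \cite{GlobalCarleman} verbatim on that interval (whence the threshold $C(\Omega,\omega)\bigl(aT+(aT)^2\bigr)$ appears simply as the standard $T'+T'^2$ with $T'=aT$), and then transforms back, paying only factors $\min(a,1/a)$ and $\max(a,1/a)$ in the constants. Your approach --- conjugating with $e^{-s\alpha}$, splitting into $M_1\psi+M_2\psi$, expanding the cross term, and absorbing the errors --- is the self-contained alternative, and its skeleton is sound; what it buys is independence from the exact statement being quoted, at the price of having to re-track every error term.

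The gap is precisely in your last sentence, where you assert that ``each term scales appropriately in $a$, from which the threshold $aT+(aT)^2$ naturally emerges.'' That is the entire content of the proposition beyond the cited reference, and it is not verified --- and with the weights $\alpha,\xi$ as defined in the paper (built from $t(T-t)$, with no $a$ in them) it is not even clearly true. In your splitting the leading positive contributions of the cross term $(M_1\psi,M_2\psi)$ carry a factor $a^2$ (each of $M_1$ and $M_2$ contributes one power of $a$ through $a\alpha_x^2$ and $a\alpha_x\partial_x$), e.g.\ $s^3\kappa^4a^2\iint e^{-2s\alpha}\xi^3|\psi|^2$, whereas the purely temporal errors coming from $s\alpha_{tt}$ and from integrating $s\alpha_t$ against $\psi_t$ carry no power of $a$ at all. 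Absorbing, say, $s\,|\alpha_{tt}|\lesssim sT^2\xi^3$ into $s^3\kappa^4a^2\xi^3$ forces a condition of the type $s\gtrsim T/a$, not $s\gtrsim aT$; so the direct computation with these weights yields a threshold of the form $C(T/a+T^2)$ rather than the announced $C(aT+(aT)^2)$. (The two are reconciled only after noticing that the rescaled weight satisfies $\alpha(x,\tau)=\alpha(x,t)/a^2$, i.e.\ the Carleman parameter itself gets rescaled by $a^2$ --- which is exactly the bookkeeping the change-of-variables argument handles automatically and your plan leaves implicit.) To repair your route you must either carry out the absorption estimates explicitly and state the threshold they actually produce, or insert the time rescaling $\tau=at$ at the outset, which is the paper's proof.
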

\begin{proof}

Set $\tau=a\,t$ and $w(x,\tau)=z(x,t)$. Then $z_t(x,t)=a\,w_{\tau}(x,\tau)$, $\Delta z(x,t)=\Delta w(x,\tau)$ and $adt=d\tau$. Equation \eqref{sistema4} becomes
\[
  w_{\tau}+\Delta w = \frac1a\,F\Bigl(x,\tfrac\tau a\Bigr) 
  \ \ \mbox{ in } \ \ 0 < x < 1,\text{ } 0<\tau<aT .
\]

In \cite{GlobalCarleman}, the inequality \eqref{Carleman11} holds for $w$ on the time interval $t\in(0,aT)$, with positive constants $C=C(\Omega,\omega)$, $s_1=C(\Omega,\omega)\,(aT+(aT)^2)$, and $\kappa_1=C(\Omega,\omega)$. In this inequality, the Carleman weights $\alpha$ and $\xi$ are defined on $(0,aT)$.
That is,
$$ \alpha (x,t) = \frac{\sigma(x)}{t(aT-t)} \qquad \text{and} \qquad \xi (x,t) = \frac{e^{\kappa (2\parallel \eta_0 \parallel_{\infty}+\eta_0(x))}}{t(aT-t)}.$$

The inequality \eqref{Carleman11} holds for $w$ in the following form. 
\begin{equation}\label{eq:carleman w}
 I_{aT}(w)\;\le\;C\!\left(
   s^{3}\lambda^{4}\!\!\iint_{\omega\times(0, aT)} e^{-2s\alpha}\xi^{3}|w|^{2} dxd\tau
   +\int_0^1\int_0^{aT}   e^{-2s\alpha}\left|\frac1a\,F\Bigl(x,\tfrac\tau a\Bigr)\right|^{2}dxd\tau \right).
\end{equation}

Here, we use  $I_{aT}(w)$ to specify that the terms are integrated in $\int_0^1\int_0^{aT}$ and not in $\int_0^1\int_0^{T}$ like in $I(z)$.  

 Using a simple change of variables $\tau = at$, we obtain de relation
\[
\min \left(\frac{1}{a},a \right) I(z)\leq I_{aT}(w),
\]
and the relation 
\begin{eqnarray*}
   s^{3}\lambda^{4}\!\!\iint_{\omega\times(0, aT)} e^{-2s\alpha}\xi^{3}|w|^{2} dxd\tau
   +\int_0^1\int_0^{aT}   e^{-2s\alpha}\left|\frac1a\,F\Bigl(x,\tfrac\tau a\Bigr)\right|^{2}dxd\tau  \\
\leq \max \left(\frac{1}{a},a \right)  \left(s^{3}\lambda^{4}\!\!\iint_{\omega\times(0, T)} e^{-2s\alpha}\xi^{3}|z|^{2} dxdt
   +\int_0^1\int_0^{T}   e^{-2s\alpha}\left|\,F\Bigl(x,t \Bigr)\right|^{2}dxdt \right).
\end{eqnarray*}

However, the coefficients $\alpha(x,\tau)$ and $\xi(x,\tau)$ that appears in the terms with the function $w$ are replaced by funcions $\alpha(x,at)$ and $\xi(x,at)$. Readapting these Carleman estimates and inserting these relations into \eqref{eq:carleman w} we obtain 
\begin{equation}
I(z)\leq C\max \left(\frac{1}{a^2},a^2 \right) \left[ s^3 \kappa^4 \int \int_{\omega\times(0,T)}e^{-2s\alpha}\xi^3 \vert z \vert^2 dxdt + \int_0^T \int_0^1 e^{-2s\alpha} \vert F \vert^2 dx dt \right].
\end{equation}
\end{proof}

In what follows we will use the following proposition.
\begin{prop}[\cite{biccari2019}] \label{lemaBiccari_0}
For any $k>0$, $s>1$ and $t \in (0,T)$, it holds that
\begin{equation}
exp\left( - \frac{(1+s)\sigma^-}{t(T-t)} \right)< exp\left( - \frac{s\sigma^+}{t(T-t)} \right), 
\end{equation}
where $\sigma^-$ and $\sigma^+$ are given in \eqref{eq:alpha_sigma_mais_menos}.
\end{prop}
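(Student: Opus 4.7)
The plan is to reduce the displayed exponential inequality to a purely algebraic inequality in the numbers $\sigma^+,\sigma^-$, by exploiting the monotonicity of $\exp$, and then to verify that algebraic inequality using the explicit formula for the weight $\sigma$. Since the map $r\mapsto e^{-r}$ is strictly decreasing and $t(T-t)>0$ for every $t\in(0,T)$, the claim
$$\exp\!\left(-\frac{(1+s)\sigma^-}{t(T-t)}\right)<\exp\!\left(-\frac{s\sigma^+}{t(T-t)}\right)$$
is equivalent, after multiplying by the common positive factor $t(T-t)$, to
$$s\,\sigma^+\;<\;(1+s)\,\sigma^-, \qquad \text{i.e.,} \qquad s\,(\sigma^+-\sigma^-)\;<\;\sigma^-.$$
An immediate consequence of this reduction is that $t$ disappears entirely, so the $t$-uniformity on $(0,T)$ asserted in the proposition is automatic: only the comparison between the constants $\sigma^+$ and $\sigma^-$ matters.

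Next, I would evaluate $\sigma^+$ and $\sigma^-$ explicitly from the definition
$\sigma(x)=e^{4\kappa\|\eta_0\|_\infty}-e^{\kappa(2\|\eta_0\|_\infty+\eta_0(x))}$. Setting $A:=e^{\kappa\|\eta_0\|_\infty}$ and using that $\eta_0\in C^2([0,1])$ vanishes at $\{0,1\}$ and attains its maximum $\|\eta_0\|_\infty$ somewhere inside $[0,1]$, the map $x\mapsto\sigma(x)$ is largest where $\eta_0=0$ and smallest where $\eta_0=\|\eta_0\|_\infty$, giving
$$\sigma^+=A^4-A^2=A^2(A^2-1), \qquad \sigma^-=A^4-A^3=A^3(A-1),$$
and therefore
$$\sigma^+-\sigma^-=A^3-A^2=A^2(A-1), \qquad \frac{\sigma^-}{\sigma^+-\sigma^-}=A=e^{\kappa\|\eta_0\|_\infty}.$$
So the algebraic inequality $s(\sigma^+-\sigma^-)<\sigma^-$ collapses to the one-line comparison
$$s\;<\;e^{\kappa\|\eta_0\|_\infty},$$
and verifying this comparison is the whole content of the proposition.

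The main technical obstacle is precisely this final step. The comparison $s<e^{\kappa\|\eta_0\|_\infty}$ is not a universal algebraic identity: it is a compatibility condition linking the Carleman parameters $s$ and $\kappa$. This is where the statement “for any $\kappa>0$, $s>1$” must be interpreted in context: in every invocation of Proposition~\ref{lemaBiccari_0} inside the proof of the Carleman estimate \eqref{Carleman2_coeff33}, the parameter $\kappa$ is taken large (beyond the threshold $\kappa_1$), and in particular larger than $(\log s)/\|\eta_0\|_\infty$, so that the bound applies. A careful writing of the proof should therefore make this largeness of $\kappa$ explicit — either as a hypothesis in the statement or as a preliminary comment before the one-line algebraic verification above — rather than leaving it implicit in the phrase “for any $\kappa>0$”.
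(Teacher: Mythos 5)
Your reduction and your algebra are both correct: with $A=e^{\kappa\|\eta_0\|_\infty}>1$ one gets $\sigma^+=A^4-A^2$ and $\sigma^-=A^4-A^3$, and the displayed inequality is equivalent to $(1+s)\sigma^->s\sigma^+$, i.e.\ to $s<A=e^{\kappa\|\eta_0\|_\infty}$. There is no proof in the paper to compare you against: Proposition~\ref{lemaBiccari_0} is imported verbatim from \cite{biccari2019} with no argument given. What your computation actually establishes, though, is that the proposition is \emph{false as quoted}: ``for any $k>0$, $s>1$'' cannot stand, since for fixed $\kappa$ the inequality reverses as soon as $s\ge e^{\kappa\|\eta_0\|_\infty}$. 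Concretely, if $\kappa$ and $\eta_0$ are such that $A=3/2$ and $s=2$, then $(1+s)\sigma^-=3A^3(A-1)=5.0625$ while $s\sigma^+=2A^2(A^2-1)=5.625$, so the left exponential is the \emph{larger} one. You are right that the missing hypothesis is exactly $1<s<e^{\kappa\|\eta_0\|_\infty}$, and this diagnosis is worth recording.

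Where I would push back is on your proposed repair. You suggest reading the lemma as applicable because ``$\kappa$ is taken large, larger than $(\log s)/\|\eta_0\|_\infty$,'' but that inverts the quantifier order actually used: in Proposition~\ref{carleman_um_controle0} the estimate is asserted for \emph{all} $s>s_1$ with $\kappa>\kappa_1$ chosen first, and $s_1$ grows with $T$ and $\overline{K}$ while $\kappa_1$ does not depend on them, so the compatibility condition $s<e^{\kappa\|\eta_0\|_\infty}$ is not guaranteed on the stated parameter ranges; the same calibration issue touches \eqref{H_0} and \eqref{H2}, whose exponents are tuned to $\sigma^-$ and $\alpha^-$. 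A clean fix is either (i) to state the lemma with the explicit hypothesis $1<s<e^{\kappa\|\eta_0\|_\infty}$ and, at the end of the Carleman argument, fix one admissible pair $(\kappa_1,s_1)$ after enlarging $\kappa_1$ so that $e^{\kappa_1\|\eta_0\|_\infty}>s_1$ (accepting that $\kappa_1$ then depends on $T$ and $\overline{K}$), or (ii) to replace the comparison $(1+s)\sigma^-$ versus $s\sigma^+$ by $2s\sigma^-$ versus $s\sigma^+$, which by your own computation reduces to $A>1$ and holds unconditionally (this is precisely \eqref{max_min}), at the cost of strengthening the decay required of the kernels. Your one-line equivalence is the right tool for either route; the proof as you wrote it just needs the largeness of $\kappa$ promoted from a closing remark to an explicit hypothesis with the correct quantifier order.
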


We can apply Proposition \ref{prop2.2}, the estimate of Proposition \ref{lemaBiccari_0} and hypothesis \eqref{H_0} to obtain the following intermediate Carleman estimate for our linearized system.

\begin{prop}\label{Carleman1}
Let $\phi_T, \psi_T \in L^2(\Omega)$ be given and assume that the kernels $\lambda_i(t) J_i(x)$, $i=1,2$, satisfy \eqref{H_0}. Then, there exist constants  $C=C(\Omega,\omega,a_1,a_2)$, $\kappa_1=C(\Omega,\omega, M, b_1,b_2,c_1,c_2)$ and $s_1=C(\Omega,\omega)(aT+ (aT)^2+\overline{K}^\frac{2}{3}T^2)$, where $\overline{K}$ was defined in \eqref{H_0} and $a = \max(a_1,a_2)$, such that a solution $\phi$, $\psi$ of the system \eqref{sistema_adj0} with final conditions $\phi_T$ and  $\psi_T$, satisfy
\begin{equation}
\label{eq:carleman_dois_controles}
I(\phi) + I(\psi) \leq Cs^3 \kappa^4\left(\iint_{\omega \times (0,T)} \!  \!  \!  \!  \! e^{-2s\alpha}\xi^3 \vert \phi \vert^2dxdt+\iint_{\omega \times (0,T)} \!  \!  \!  \!  e^{-2s\alpha}\xi^3 \vert \psi \vert^2dxdt\right)
\end{equation}
for any constants $\kappa > \kappa_1$ and $s>s_1$.
\end{prop}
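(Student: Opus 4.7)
The plan is to apply the single-equation Carleman estimate of Proposition \ref{prop2.2} separately to each component of the adjoint system \eqref{sistema_adj0}, treating every lower-order term, the nonlocal integral, and the coupling with the other component as a source. From $-\phi_t = a_1\phi_{xx}+R_\phi$ we rewrite $\phi_t+a_1\phi_{xx} = -R_\phi =: F_\phi$ where
\[
F_\phi = b_1\phi_x - c_1\phi - \lambda_1(t)\!\int_0^1 J_1(x-z)\phi(z,t)\,dz - \tilde a\phi - \tilde c\psi,
\]
and analogously $F_\psi$, with $(\phi,\psi)$ swapped and $(\tilde b,\tilde d)$ replacing $(\tilde c,\tilde a)$. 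Summing the two resulting bounds yields
\[
I(\phi)+I(\psi) \le C\Bigl[s^3\kappa^4\!\!\iint_{\omega\times(0,T)}\!\!e^{-2s\alpha}\xi^3(|\phi|^2+|\psi|^2)\,dx\,dt + \iint_Q e^{-2s\alpha}(|F_\phi|^2+|F_\psi|^2)\,dx\,dt\Bigr].
\]

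The core of the argument is then to split the source integrals into their natural pieces and absorb each one into $I(\phi)+I(\psi)$ by taking $s$ and $\kappa$ large. The zero-order contributions $c_i\phi,\tilde a\phi,\tilde c\psi,\tilde b\phi,\tilde d\psi$ (all bounded by $M$ or constants) produce terms of the form $C\iint_Q e^{-2s\alpha}|\phi|^2\,dx\,dt$ and the analogous one for $\psi$. Since $\xi^{-3}\le CT^6$, these are dominated by $CT^6(s^3\kappa^4)^{-1}$ times the cubic pieces $s^3\kappa^4 e^{-2s\alpha}\xi^3|\phi|^2$ and $s^3\kappa^4 e^{-2s\alpha}\xi^3|\psi|^2$ of $I(\phi),I(\psi)$, hence absorbable for $s,\kappa$ above a threshold depending on $M, b_i, c_i$ (this is exactly where the dependence of $\kappa_1$ on $M$ enters). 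The first-order contributions $b_i\phi_x, b_i\psi_x$ are handled identically, absorbed into the gradient pieces $s\kappa^2 e^{-2s\alpha}\xi|\phi_x|^2$ of $I(\phi),I(\psi)$ via the bound $\xi^{-1}\le CT^2$.

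The only delicate step is absorbing the nonlocal terms. By Cauchy--Schwarz,
\[
\left|\lambda_1(t)\int_0^1 J_1(x-z)\phi(z,t)\,dz\right|^2 \le \lambda_1^2(t)\!\int_{-1}^1\!|J_1(y)|^2\,dy\cdot\|\phi(\cdot,t)\|_{L^2(\Omega)}^2,
\]
and hypothesis \eqref{H_0} yields $\lambda_1^2(t)\!\int|J_1|^2\le \overline{K}\exp(-2\sigma^-/(t(T-t)))$. Pairing this temporal decay with the uniform bound $e^{-2s\alpha(x,t)}\le \exp(-2s\sigma^-/(t(T-t)))$ and invoking Proposition \ref{lemaBiccari_0}, the combined weight for the nonlocal source is controlled by $C\overline{K}T^2 e^{-2s\alpha(x,t)}$. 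Integrating in $x$ and applying Fubini, the nonlocal contribution to $\iint_Q e^{-2s\alpha}|F_\phi|^2$ is dominated by $C\overline{K}T^2\iint_Q e^{-2s\alpha}|\phi|^2\,dx\,dt$, absorbable into the cubic piece of $I(\phi)$ once $s\ge C(\Omega,\omega)\overline{K}^{2/3}T^2$. The same reasoning handles the $\psi$-nonlocal term.

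The principal obstacle is this nonlocal absorption, since it requires matching the spatial-$x$ Carleman weight $e^{-2s\alpha(x,t)}$ with the purely temporal decay provided by \eqref{H_0} and then recycling the resulting $\|\phi(\cdot,t)\|_{L^2}^2$ factor as a zero-order term. It is precisely this matching, enabled by the exponent comparison of Proposition \ref{lemaBiccari_0}, that forces the lower bound $s_1\sim \overline{K}^{2/3}T^2$ appearing in the statement of the proposition. Once all source pieces have been absorbed, only the two local observation terms on $\omega\times(0,T)$ remain on the right-hand side, yielding \eqref{eq:carleman_dois_controles}.
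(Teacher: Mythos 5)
Your proposal is correct and follows essentially the same route as the paper: apply the scalar Carleman estimate of Proposition \ref{prop2.2} to each equation of \eqref{sistema_adj0} with all lower-order, coupling and nonlocal terms treated as sources, absorb the local terms into $I(\phi)+I(\psi)$ using the lower bound on $\xi$ (which is where $\kappa_1$ picks up its dependence on $M$, $b_i$, $c_i$), and absorb the nonlocal terms by pairing the temporal decay of \eqref{H_0} with the Carleman weight via Proposition \ref{lemaBiccari_0}, which produces the threshold $s_1\sim\overline{K}^{2/3}T^2$. The paper delegates the nonlocal absorption to \cite{biccari2019} and \cite{Limaco-Lobosco-Yapu_24}, whereas you spell it out, but the argument is the same.
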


\begin{proof}

The proof is very similar to \cite{biccari2019} (part of Proposition 2.4) and \cite{Limaco-Lobosco-Yapu_24} (Proposition 2.3). We only need to observe that in the intermediate estimate 
$$I(\phi)+I(\psi)\leq C \left[ s^3 \kappa^4 \int \int_{\omega\times(0,T)} \!  \!  \!  \!  \!  \!  \! e^{-2s\alpha}\xi^3 \vert \phi \vert^2 dxdt +s^3 \kappa^4 \int \int_{\omega\times(0,T)} \!  \!  \!  \!  \!  \!  \! e^{-2s\alpha}\xi^3 \vert \psi \vert^2 dxdt\right.$$
$$\left.+\int_0^T \int_0^1 e^{-2s\alpha} \vert C_1 \phi_x + C_2\phi +C_3 \psi -\lambda_1(t) \int_0^1J_1(x-z)\phi(z,t)dz  \vert^2 dx dt \right. .$$
$$\left.+\int_0^T \int_0^1 e^{-2s\alpha} \vert C_4 \psi_x + C_5\psi +C_6 \phi -\lambda_2(t) \int_0^1J_2(x-z)\psi(z,t)dz  \vert^2 dx dt \right], $$
the coefficients of the system \eqref{sistema_adj0} appear as $C_1 = b_1$, $C_2 = -c_1 - \tilde a(x,t) $, $C_3 = -\tilde c (x,t)$, $C_4 = b_2$, $C_5 = -c_2 - \tilde d (x,t)$ and $C_6=-\tilde b (x,t) $.

We use that the function $\xi$ has a minimum value. Then, for $\kappa_1$ sufficiently big we get 
$$s \kappa^2\xi/2>Cb_1^2  \text{, } \qquad  s\kappa^2\xi/2>Cb_2^2,$$
$$s^3\kappa^4\xi^3/2>C(c_1 + \tilde a (x,t) )^2  \text{, } \qquad s^3\kappa^4\xi^3/2>C(\tilde c (x,t))^2,$$
$$s^3\kappa^4\xi^3/2>C(c_2 + \tilde d (x,t) )^2 \qquad \text{ and } \qquad s^3\kappa^4\xi^3/2>C(\tilde b (x,t))^2,$$
because $\tilde a$, $\tilde b$, $\tilde c$ e $\tilde d$ are bounded functions. 
In particular, we get the dependences of the constant $\kappa_1=C(\Omega, \omega, \tilde a, \tilde b, \tilde c, \tilde d, b_1,b_2,c_1,c_2) = C(\Omega, \omega,M,b_1,b_2,c_1,c_2)$.

The other steps of the proof are very similar to \cite{Limaco-Lobosco-Yapu_24} and we omit them.
\end{proof}

\begin{rem}\label{2_controles}
With this proposition we can easily obtain the null controllability with one control in each equation. For obtaining the same result using a single control in only one equation, we need the additional hypothesis \eqref{H2}.
\end{rem} 

\begin{proof}(Proposition \ref{carleman_um_controle0})

We adapt the steps of the demonstration in article \cite{Limaco-Lobosco-Yapu_24}, Proposition 2.4. 
The general method was developped by Gonzales-Burgos and de Teresa \cite{GB-Ter-10}.
Because of the non-constant coefficients, we see how some changes arise and the changes they imply in the hypothesis of our Proposition \ref{carleman_um_controle0}.

Using the last proposition applied to a set $\omega_0 \subset \omega $,
\begin{equation}\label{CarlemanPrim}
I(\phi) + I(\psi) \leq Cs^3 \kappa^4\left(\iint_{\omega_0 \times (0,T)} \! \! \! \! e^{-2s\alpha}\xi^3 \vert \phi \vert^2dxdt+\iint_{\omega_0 \times (0,T)} \! \! \! \! e^{-2s\alpha}\xi^3 \vert \psi \vert^2dxdt\right).
\end{equation}
Here, we have that $C = C(\Omega,\omega,a_1,a_2)$, $\kappa > \kappa_1$ and $s>s_1$ with $\kappa_1=C(\Omega,\omega, M, b_1,b_2,c_1,c_2)$ and $s_1=C(\Omega,\omega)(aT+ (aT)^2+\overline{K}^\frac{2}{3}T^2)$.

We analyse the last term. Let us consider a smooth function $\chi \in C^\infty_0(\omega)$ with $0 \leq \chi \leq 1$ in $\omega$ and $\chi \equiv 1$ in $\omega_0$. Thus 

\begin{align}\label{PrincipalM}
\iint_{\omega_0 \times (0,T)} e^{-2s\alpha} \kappa^4 s^3\xi^3 \vert \psi \vert^2dxdt \leq \iint_{\omega \times (0,T)} e^{-2s\alpha} \kappa^4 s^3\xi^3 \chi \vert \psi \vert^2dxdt \nonumber\\
= \iint_{\omega \times (0,T)} e^{-2s\alpha} \kappa^4 s^3\xi^3 \chi \psi \frac{1}{\tilde c} \left( -\phi_t - a_1  \phi_{xx} + b_1 \phi_x - c_1\phi -\right.\nonumber\\
- \lambda_1\int_0^1\left.J_1(x-z)\phi(z,t)dz - \tilde a \phi \right)       dxdt\nonumber\\
= M_1+M_2+M_3+M_4+M_5+M_6,
\end{align}
where the terms $M_i$, $i=1,...,6$, are given by
$$M_1= -\iint_{\omega \times (0,T)} e^{-2s\alpha} \kappa^4 s^3\xi^3 \chi   \frac{\psi}{\tilde c} \phi_t  dxdt$$

$$M_2= -\iint_{\omega \times (0,T)} e^{-2s\alpha} \kappa^4 s^3\xi^3 \chi   \frac{\psi}{\tilde c} a_1  \phi_{xx}   dxdt$$

$$M_3= \iint_{\omega \times (0,T)} e^{-2s\alpha} \kappa^4 s^3\xi^3 \chi   \frac{\psi}{\tilde c} b_1 \phi_x        dxdt$$

$$M_4= -\iint_{\omega \times (0,T)} e^{-2s\alpha} \kappa^4 s^3\xi^3 \chi  \frac{\psi}{\tilde c}  c_1\phi dxdt$$

$$M_5= - \iint_{\omega \times (0,T)} e^{-2s\alpha} \kappa^4 s^3\xi^3 \chi   \frac{\psi}{\tilde c} \lambda_1\int_0^1J_1(x-z)\phi(z,t)dz     dxdt$$

$$M_6= - \iint_{\omega \times (0,T)} e^{-2s\alpha} \kappa^4 s^3\xi^3 \chi  \frac{\psi}{\tilde c} \tilde a \phi dxdt.$$

Considering $\tilde c \in W^{2,\infty}(Q)$, $\tilde c(x,t) \neq 0$ in $\bar \omega \times (0,T)$, we have that the terms $M_i$, for $i= 1,2,...,6$, are well defined.

The term $M_3$ is estimated as follows:
\begin{align*}
    |M_3| &= \left | \iint_{\omega \times (0,T)} e^{-2s\alpha} \kappa^4 s^3\xi^3 \chi  \frac{\psi}{\tilde c} b_1 \phi_x        dxdt \right |  \\
    &\leq \iint_{\omega \times (0,T)} \left |( e^{-2s\alpha} \kappa^4 s^3\xi^3 \chi  \frac{\psi}{\tilde c} b_1 )_x \right | |\phi|  dxdt \\
    &\leq  \kappa^4 s^3 \iint_{\omega \times (0,T)} \biggl( e^{-2s\alpha}  (2s |\alpha_x|)\xi^3 \chi |\psi|\frac{|b_1|}{|\tilde c |} |\phi| + e^{-2s\alpha} 3 \xi^2 \xi_x \chi |\psi|\frac{|b_1|}{|\tilde c |}  |\phi| +  \\
    &  + e^{-2s\alpha} \xi^3 |\chi_x| |\psi| \frac{|b_1|}{|\tilde c |} |\phi| + e^{-2s\alpha} \xi^3 \chi  |\psi|\frac{|b_1 \tilde c_x|}{|\tilde c |^2}  |\phi| + e^{-2s\alpha} \xi^3 \chi |\psi_x| \frac{|b_1|}{|\tilde c |} |\phi| \biggr) dx dt\\
    &\leq C \kappa^4 s^3 \iint_{\omega \times (0,T)} \biggl( e^{-2s\alpha}  (2s |\alpha_x|)\xi^3 \chi |\psi| |\phi| + e^{-2s\alpha} 3 \xi^2 \xi_x \chi |\psi|  |\phi| +  \\
    &  + e^{-2s\alpha} \xi^3 |\chi_x| |\psi|  |\phi| + e^{-2s\alpha} \xi^3 \chi  |\psi|  |\phi| + e^{-2s\alpha} \xi^3 \chi |\psi_x| |\phi| \biggr) dx dt,
\end{align*}
where $C = C(b_1, \tilde c) = \max \{\frac{|b_1|}{|\tilde c |} ,\frac{|b_1 \tilde c_x|}{|\tilde c |^2} \}$. Because $\tilde c \in W^{2,\infty}(Q)$, $\tilde c(x,t) \neq 0$ in $\bar \omega \times (0,T)$, we have that the maximum is well defined.

Using the estimates $|\xi_x|\leq C(\omega) \kappa |\xi|$, $|\alpha_x| \leq C(\omega) \kappa |\xi|$ and $|\chi_x|\leq C(\omega)$, we get
\begin{align}\label{M3}
    |M_3| &\leq C \kappa^5 s^4 \iint_{\omega \times (0,T)} e^{-2s\alpha} \xi^4 |\psi| |\phi| dxdt + C k^5 s^3 \iint_{\omega \times (0,T)} e^{-2s\alpha} \xi^3 |\psi| |\phi| dxdt+ \nonumber\\
    & + C \kappa^4 s^3 \iint_{\omega \times (0,T)} e^{-2s\alpha} \xi^3 |\psi| |\phi| dxdt +\nonumber\\
     & + C \iint_{\omega \times (0,T)} e^{-2s\alpha} (s^{1/2} \xi^{1/2} k |\psi_x|) (k^3 s^{5/2} \xi^{5/2} |\phi|) dxdt.
\end{align}
Above, the constant $C$ has its dependence on $C = C(\omega, b_1, \tilde c)$.

The last term of the equation above (without the factor $C$) is estimated by 
\begin{align*}
 &  \iint_{\omega \times (0,T)} e^{-2s\alpha} (s^{1/2} \xi^{1/2} k |\psi_x|) (k^3 s^{5/2} \xi^{5/2} |\phi|) dxdt \\
  &\qquad \qquad \qquad \leq \varepsilon \iint_{\omega \times (0,T)} e^{-2s\alpha} \kappa^2 s \xi |\psi_x|^2 dxdt + C_\varepsilon \iint_{\omega \times (0,T)} e^{-2s\alpha} \kappa^6 s^5 \xi^5 |\phi|^2 dxdt.
\end{align*}

A similar estimate for the first three integrals in \eqref{M3} give
\begin{equation}
\label{eq:M3_eps}
\begin{split}
    |M_3| \leq &\varepsilon C \iint_{\omega \times (0,T)} e^{-2s\alpha} \kappa^4 s^3 \xi^3 |\psi|^2 dxd +\varepsilon C \iint_{\omega \times (0,T)} e^{-2s\alpha} \kappa^2 s \xi |\psi_x|^2 dxdt  + \\
    &+ C C_\varepsilon \iint_{\omega \times (0,T)} e^{-2s\alpha} \kappa^6 s^5 \xi^5 |\phi|^2 dxdt,
\end{split}
\end{equation}
with $C = C(\omega, b_1, \tilde c)$.

If we choose $\varepsilon_2>0$ such that $\varepsilon C \leq \varepsilon_2 $, then the first and second term of the right hand side of \eqref{eq:M3_eps} can be absorbed by $I(\psi)$ and we get 
\begin{equation}\label{M3.2}
|M_3|\leq \varepsilon_2  I(\psi) + C_{\varepsilon_2}   \iint_{\omega \times (0,T)} e^{-2s\alpha} \kappa^6 s^5 \xi^5 |\phi|^2 dxdt.    
\end{equation}
Here, $C_{\varepsilon_2} = C(\omega, b_1, \tilde c)$.

For the term $M_5$, we have
\begin{align*}
    |M_5| =& \left |\iint_{\omega \times (0,T)} e^{-2s\alpha} \kappa^4 s^3\xi^3 \chi  \frac{\psi}{\tilde c} \lambda_1(t)\int_0^1J_1(x-z)\phi(z,t)dz     dxdt \right | \\
    \leq& \varepsilon C  \iint_{\omega \times (0,T)} e^{-2s\alpha} \kappa^4 s^3 \xi^3 |\psi|^2 dxdt \\
    & +  C_\varepsilon \iint_{\omega \times (0,T)} e^{-2s\alpha} \kappa^4 s^3\xi^3 \left( \int_0^1 \lambda_1(t) J_1(x-z) \phi(z,t) dz  \right)^2 dxdt,
\end{align*}
with $C = C(\tilde c)$.

For an apropriate $\varepsilon_2>0$, the first term of the right hand side can be absorbed by $I(\psi)$.
\begin{align*}
    &|M_5| \leq  \varepsilon_2  I(\psi) \\
    & +  C_{\varepsilon_2} \iint_{\omega \times (0,T)} e^{-2s\alpha} \kappa^4 s^3\xi^3 \left( \int_0^1 \lambda_1(t) J_1(x-z) \phi(z,t) dz  \right)^2 dxdt. 
\end{align*}
By Cauchy-Schwarz inequality,
\begin{align}\label{M5}
    &|M_5| \leq  \varepsilon_2  I(\psi) \nonumber\\
    & +  C_{\varepsilon_2}  \iint_{\omega \times (0,T)} \! \! \! \! \! \! \! \! \! \! \! e^{-2s\alpha} \kappa^4 s^3 \xi^3 \left( \int_0^1 \lambda_1(t)^2 J_1(x-z)^2 dz \right) \left( \int_0^1 |\phi(z,t)|^2 dz \right) dxdt. 
\end{align}
Here, $C_{\varepsilon_2} = C( \tilde c)$.


Carleman weights are so that,
\begin{equation} \label{max_min}
    \alpha^{+}(t) < 2 \alpha^{-}(t).
\end{equation}
In fact, using the notations in \eqref{eq:alpha_sigma_mais_menos}, 
\begin{equation}
\begin{split}
 \alpha^{+}(t) = \max_{x\in [0,1]} \frac{\sigma(x)}{t(T-t)} < 2  \min_{x\in [0,1]} \frac{\sigma(x)}{t(T-t)} = 2 \alpha^{-}(t)
\end{split}
\end{equation}
with $\sigma(x) = e^{4k \parallel \eta_0 \parallel_{\infty} } - e^{k(2\parallel \eta_0 \parallel_{\infty}+\eta_0(x))}.$ 

So, equivalently,
\begin{equation}
\begin{split}
   \frac{e^{4k \parallel \eta_0 \parallel_{\infty} } - e^{2k\parallel \eta_0 \parallel_{\infty}}}{t(T-t)} < 2  \frac{e^{4k \parallel \eta_0 \parallel_{\infty} } - e^{3k\parallel \eta_0 \parallel_{\infty}}}{t(T-t)}. 
\end{split}
\end{equation}

This estimate is always true, since the following always holds
\begin{equation}
\begin{split}
   e^{4k \parallel \eta_0 \parallel_{\infty} } - e^{2k\parallel \eta_0 \parallel_{\infty}} < 2  (e^{4k \parallel \eta_0 \parallel_{\infty} } - e^{3k\parallel \eta_0 \parallel_{\infty}}).
\end{split}
\end{equation}

Moreover, we denote 
$\xi^{-}(t) = min_{x\in[0,1]} \xi (x,t)$ and $\xi^{+}(t) = max_{x\in[0,1]} \xi (x,t)$.

Using the the conditions \eqref{H2} and \eqref{max_min} in \eqref{M5}, we have
\begin{align*}
    |M_5| &\leq \varepsilon_2 I(\psi) +  C_{\varepsilon_2} \iint_{\omega \times (0,T)} e^{-2s\alpha} \kappa^4 s^3 \xi^3 \delta e^{-2 s \alpha^{-}(t)} \int_0^1 |\phi(z,t)|^2 dz dxdt \\
    &\leq \varepsilon_2 I(\psi) + C_{\varepsilon_2} \delta \iint_{\omega \times (0,T)} e^{-2s\alpha^{-}(t)} \kappa^4 s^3 {\xi^{+}}^3(t) e^{- s \alpha^{+}(t)} \int_0^1 |\phi(z,t)|^2 dz dxdt \\
    & \leq \varepsilon_2  I(\psi) +  C_{\varepsilon_2} |\omega| \delta \int_0^T e^{-2s\alpha^{-}(t) - s \alpha^{+}(t) } \kappa^4 s^3 {\xi^{+}}^3(t) \int_0^1 |\phi(z,t)|^2 dz dt,
\end{align*}
where $|\omega|$ denotes the measure of $\omega$.

We want to absorb this by the second last term in $I(\phi)$, which is 
$$ \int_0^T \int_0^1  e^{-2 s\alpha(z,t)} \kappa^4 s^3 \xi^3 (z,t)  |\phi(z,t)|^2 dz dt. $$

In order to do that, it is enough to show that, for a small $\varepsilon_3>0$,
\begin{equation}
C_{\varepsilon_2} |\omega| \delta \int_0^T e^{-2s\alpha^{-}(t) - s \alpha^{+}(t) } \kappa^4 s^3 {\xi^{+}}^3(t) \int_0^1 |\phi(z,t)|^2 dz dt \leq   \varepsilon_3 \int_0^T \int_0^1  e^{-2 s\alpha(z,t)} \kappa^4 s^3 \xi^3 (z,t)  |\phi(z,t)|^2 dz dt.  
\end{equation}

In particular, this inequality can be obtained if we prove that
\begin{equation}
    \label{comparacao_absorver}
     C_{\varepsilon_2} |\omega| \delta \frac{ e^ {-2s\alpha^{-}(t) - s \alpha^{+}(t) } {\xi^{+}}^3(t) }{ e^{-2 s\alpha(z,t)} \xi^3 (z,t) } \leq \varepsilon_3,
\end{equation}
for $(z,t) \in [0,1] \times [0,T]$. 

First, $\frac{{\xi^{+}}^3(t)}{\xi^3 (z,t)} \leq \frac{{\xi^{+}}^3(t)}{{\xi^{-}}^3(t)} = \beta$, for the constant $\beta = e^{\kappa \parallel \eta_0 \parallel_{\infty} } > 1$, then \eqref{comparacao_absorver} is bounded by
\begin{equation*}
     C_{\varepsilon_2} |\omega| \delta \frac{ e^ {-2s\alpha^{-}(t) - s \alpha^{+}(t) } {\xi^{+}}^3(t) }{ e^{-2 s\alpha(z,t)} \xi^3 (z,t) } \leq C_{\varepsilon_2} |\omega| \delta \beta \frac{ e^ {-2s\alpha^{-}(t) - s \alpha^{+}(t) } }{ e^{-2 s\alpha(z,t)}  } \leq \varepsilon_3,
\end{equation*}
Observe, using \eqref{max_min}, that 
$$-2\alpha^{-}(t) -  \alpha^{+}(t) +  2\alpha(z,t) < -2 \alpha^{+}(t) +  2\alpha(z,t) \leq 0,$$
then $ C_{\varepsilon_2} |\omega| \delta \beta e^ {-s (2\alpha^{-}(t) +  \alpha^{+}(t) -  2\alpha(z,t))} < C_{\varepsilon_2} |\omega| \delta \beta$, and we can choose $\delta$ depending only of $\omega$, $\tilde c$ and $\kappa$ such that  
\begin{equation}\label{M5.2}
    |M_5| \leq \varepsilon_2 I(\psi) + \varepsilon_3 I(\phi).
\end{equation}

The other terms we have the similar estimates as in \cite{sistema-CFLM}. For completeness we recall the main steps. We estimate the term $M_1$ as follows
$$\vert M_1 \vert  \leq  \iint_{\omega \times (0,T)} \vert (e^{-2s\alpha} (-2s \alpha_t) \kappa^4 s^3 \xi^3 \chi \frac{\psi}{\tilde c} \phi + e^{-2s\alpha} \kappa^4 s^3 3 \xi^2 \xi_t \chi \frac{\psi}{\tilde c} \phi +$$
$$- e^{-2s\alpha} \kappa^4 s^3 \xi^3 \chi \frac{\tilde c_t}{\tilde c^2} \psi \phi + e^{-2s\alpha} \kappa^4 s^3 \xi^3 \chi \frac{\psi_t}{\tilde c} \phi ) \vert dxdt .$$
Using that $|\alpha_t| \leq C(T) \xi^2$, $|\xi_t| \leq C(T) \xi^2$, $\frac{1}{\vert \tilde c(x,t) \vert} <C(\tilde c) $ and that $\frac{\vert \tilde c_t\vert }{\vert \tilde c^2\vert } < C(\tilde c)$
\begin{align}\label{M1}
|M_1| \leq & C\left( \iint_{\omega \times (0,T)} e^{-2s\alpha} \kappa^4 s^4 \xi^5 |\psi| |\phi| dxdt \right. +\nonumber\\
&+\left. \iint_{\omega \times (0,T)} e^{-2s\alpha} \kappa^4 s^3 \xi^3 |\psi_t| |\phi| dxdt \right) \nonumber\\
 \leq &\varepsilon I(\psi) + C_\varepsilon \iint_{\omega \times (0,T)} e^{-2s\alpha} \kappa^8 s^7 \xi^7 |\phi|^2 dxdt
\end{align}
with $C_\varepsilon = C(T,\tilde c)$.

For the term $M_2$, we use the identity
$$\left( e^{-2s\alpha } \xi^3 \chi \frac{\psi}{\tilde c}\right)_{xx} =  \left( e^{-2s\alpha } \xi^3 \chi\frac{1}{\tilde c}\right)_{xx} \psi + 2  \left( e^{-2s\alpha } \xi^3 \chi \frac{1}{\tilde c}\right)_x   \psi_x + e^{-2s\alpha } \xi^3 \chi \frac{1}{\tilde c}  \psi_{xx}.$$ 
We have the estimate
\[
\begin{aligned}
\left\vert \left( e^{-2s\alpha } \xi^3 \chi \frac{1}{\tilde c}\right)_{xx} \right\vert  &= \left\vert e^{-2s\alpha}\Bigl[\,
   4s^2\alpha_x^2\,\xi^3\frac{\chi}{\tilde c}
   \;-\;2s\,\alpha_{xx}\,\xi^3\frac{\chi}{\tilde c}
   \;-\;4s\,\alpha_x\Bigl(
       3\,\xi^2\xi_x\frac{\chi}{\tilde c}
       +\xi^3\bigl(\tfrac{\chi_x}{\tilde c}-\chi\tfrac{\tilde c_x}{\tilde c^2}\bigr)
     \Bigr) \right.\\
&\qquad\quad
   +3\bigl(2\,\xi\,\xi_x^2+\xi^2\,\xi_{xx}\bigr)\frac{\chi}{\tilde c}
   +6\,\xi^2\,\xi_x\Bigl(\tfrac{\chi_x}{\tilde c}-\chi\tfrac{\tilde c_x}{\tilde c^2}\Bigr)\\
&\qquad\quad
  \left. +\xi^3\Bigl[
      \tfrac{\chi_{xx}}{\tilde c}
      -2\,\tfrac{\chi_x\,\tilde c_x}{\tilde c^2}
      -\chi\Bigl(\tfrac{\tilde c_{xx}}{\tilde c^2}-2\,\tfrac{\tilde c_x^2}{\tilde c^3}\Bigr)
   \Bigr]
\,\Bigr] \right\vert \,\\
&\leq C e^{-2s\alpha } \kappa^2 s^2 \xi^5,
\end{aligned}
\]
where $C=C(\omega, \tilde c)$, because we use the estimates $|\xi_x|\leq C(\omega) \kappa |\xi|$, $|\alpha_x| \leq C(\omega) \kappa |\xi|$, $|\chi_x|\leq C(\omega)$, $|\alpha_{xx}| \leq C(\omega) \kappa^2 |\xi|$ and $|\xi_{xx}|\leq C(\omega) \kappa^2 |\xi|$.

Similarly, we have the estimate
$$ \left\vert \left( e^{-2s\alpha } \xi^3 \chi \frac{1}{\tilde c}\right)_{x} \right\vert \leq C e^{-2s\alpha } \kappa s \xi^4$$
with  $C=C(\omega, \tilde c)$.

Then, we obtain
\begin{align}\label{M2}
|M_2| &=  \left | \iint_{\omega \times (0,T)}  \left(  e^{-2s\alpha} \kappa^4 s^3\xi^3 \chi  \frac{\psi}{\tilde c} a_1 \right)_{xx}  \phi  dxdt  \right | \nonumber\\
&\leq C \iint_{\omega \times (0,T)} e^{-2s\alpha} \left( \kappa^6 (s\xi)^5 \vert \psi \vert + \kappa^5 (s \xi)^4 | \psi_x| + \kappa^4 (s\xi)^3 | \psi_{xx}|  \right) \phi dx dt \nonumber\\
&\leq \varepsilon I(\psi) + C_\varepsilon \iint_{\omega \times (0,T)} e^{-2s\alpha} \kappa^8 (s \xi)^7 |\phi|^2 dx dt.
\end{align}
Here, we have that $C_\varepsilon = C(\omega,\tilde c, a_1)$.

Finally, the terms $M_4$ and $M_6$ are estimated straightforwardly, e.g. 
\begin{equation}\label{M4}
\begin{split}
|M_4| \leq \varepsilon I(\psi) +  C_\varepsilon \iint_{\omega \times (0,T)} e^{-2s\alpha} \kappa^4 (s \xi)^3 |\phi|^2 dx dt,
\end{split}    
\end{equation}
with $C_\varepsilon = C(\tilde c, c_1)$.
Similarly for the term $M_6 = - \iint_{\omega \times (0,T)} e^{-2s\alpha} \kappa^4 s^3\xi^3 \chi \tilde a\frac{\psi}{\tilde c} \phi dxdt$ with $\chi$ a smooth function $\chi \in C^\infty_0(\omega)$ and $0 \leq \chi \leq 1$, we have
\begin{equation}\label{M6}
\begin{split}
|M_6| &\leq C \iint_{\omega \times (0,T)} e^{-2s\alpha} (\kappa^4 s^3\xi^3)^{1/2} (\kappa^4 s^3\xi^3)^{1/2} |\psi| |\phi| dxdt \\
&\leq \varepsilon I(\psi) + C_\varepsilon \iint_{\omega \times (0,T)} e^{-2s\alpha} \kappa^4 s^3\xi^3 |\phi|^2 dxdt,
\end{split}    
\end{equation}
with $C_\varepsilon = C(\tilde a, \tilde c)$.

Using \eqref{M1}, \eqref{M2}, \eqref{M3.2}, \eqref{M4}, \eqref{M5.2} and \eqref{M6} in \eqref{PrincipalM}, we finished the proof of the Carleman inequality \eqref{Carleman2_coeff33}.    
\end{proof}

As a consequence of the Carleman estimate, we prove now the observability inequality.
\begin{prop}\label{prop_observabilidade}
Consider the adjoint system \eqref{sistema_adj0} with $\tilde c \in W^{2,\infty}(Q)$, $\tilde c(x,t) \neq 0$ in $\bar \omega \times (0,T)$  and   $\phi_T \in L^2(\Omega)$, $\psi_T \in L^2(\Omega)$. Moreover, assume that the kernels $\lambda_i(t) J_i(x)$, $i=1,2$, satisfy \eqref{H_0} and suppose that the kernel $\lambda_1(t) J_1(x)$ satisfies \eqref{H2}. Then, there exist constants $C=C(M,\Omega,\omega,a_1,a_2,b_1,b_2,c_1,c_2,T,\tilde c, \overline{K})$, where $\overline{K}$ was defined in \eqref{H_0}, such that the solution $\phi$, $\psi$ of the system \eqref{sistema_adj0} satisfy    \begin{equation}\label{obsin}
        \parallel\phi(0)\parallel^2_{L^2(\Omega)} + \parallel\psi(0)\parallel^2_{L^2(\Omega)} \leq  C \iint_{\omega \times (0,T)} \vert \phi \vert^2 dxdt.
    \end{equation}
\end{prop}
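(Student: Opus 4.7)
The plan is to combine the Carleman estimate \eqref{Carleman2_coeff33} with a backward energy estimate for the adjoint system \eqref{sistema_adj0}. Two facts about the weights are essential: on the middle slab $[T/4,3T/4]$ both $\alpha$ and $\xi$ are bounded and bounded away from zero, so $s^3\kappa^4 e^{-2s\alpha}\xi^3$ is bounded below there by a positive constant, while on $\overline{\omega}\times(0,T)$ the product $s^7\kappa^8 e^{-2s\alpha}\xi^7$ is bounded from above, because the exponential decay of $e^{-2s\alpha}$ as $t\to 0^+$ or $t\to T^-$ dominates the polynomial blow-up of $\xi^7$.

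For the energy step, I multiply the two equations of \eqref{sistema_adj0} by $\phi$ and $\psi$ respectively, integrate over $\Omega$, integrate by parts using the Dirichlet conditions, and apply Cauchy--Schwarz and Young's inequalities. The coupling terms contribute $O(M(\|\phi\|^2_{L^2}+\|\psi\|^2_{L^2}))$, while the nonlocal ones are controlled through
\begin{equation*}
\Bigl|\lambda_i(t)\int_0^1 J_i(x-z)\phi(z,t)\,dz\Bigr| \leq \|\lambda_i\|_{L^\infty(0,T)}\,\|J_i\|_{L^2(-1,1)}\,\|\phi(\cdot,t)\|_{L^2(\Omega)},
\end{equation*}
with $\|\lambda_i\|_{L^\infty}\|J_i\|_{L^2}$ finite by \eqref{H_0}. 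Discarding the non-negative viscous terms, one obtains $\frac{d}{dt}\eta(t)\geq -C\,\eta(t)$ with $\eta(t):=\|\phi(t)\|^2_{L^2(\Omega)}+\|\psi(t)\|^2_{L^2(\Omega)}$. Gronwall then gives $\eta(0)\leq e^{Ct}\eta(t)$ for every $t\in[0,T]$, and averaging over $[T/4,3T/4]$ yields
\begin{equation*}
\eta(0)\leq \frac{2e^{3CT/4}}{T}\int_{T/4}^{3T/4}\eta(t)\,dt.
\end{equation*}

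Finally, the lower bound of the weight on the middle slab gives $\int_{T/4}^{3T/4}\eta(t)\,dt \leq C(I(\phi)+I(\psi))$, and applying Proposition~\ref{carleman_um_controle0} followed by the upper bound of the weight on $\overline{\omega}\times(0,T)$ produces
\begin{equation*}
\eta(0)\leq C\iint_{\omega\times(0,T)} |\phi|^2 \,dxdt,
\end{equation*}
that is \eqref{obsin}. The parameters $s\geq s_1$ and $\kappa\geq \kappa_1$ are fixed at admissible values, so the final constant $C$ depends only on the quantities listed in the statement; in particular, $\overline{K}$ enters both through $s_1$ and through the nonlocal bound in the energy identity.

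The chief technical point is the energy estimate itself: one must verify that every zero-order, coupling, and nonlocal contribution can be dominated by $C\,\eta(t)$ without invoking the gradient norms (which are kept on the favourable side of the inequality). This succeeds precisely because of the uniform bounds provided by \eqref{BoundM} and \eqref{H_0}, and no assumption beyond those already required by Proposition~\ref{carleman_um_controle0} is needed.
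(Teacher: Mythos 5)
Your proposal is correct and follows essentially the same route as the paper: the Carleman estimate \eqref{Carleman2_coeff33} combined with the lower bound of the weight on $[T/4,3T/4]$ and its upper bound on $\omega\times(0,T)$, plus a backward energy/Gronwall estimate in which the coupling terms are absorbed via \eqref{BoundM} and the nonlocal terms via the boundedness of $\lambda_i$ and $J_i$. The only cosmetic difference is that the paper bounds the nonlocal contribution by expanding $\int_0^1\int_0^1(\phi(z,t)^2+\phi(x,t)^2+\psi(z,t)^2+\psi(x,t)^2)\,dz\,dx$ rather than quoting Cauchy--Schwarz directly, which amounts to the same estimate.
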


\begin{proof}   
    Using Carleman inequality \eqref{Carleman2_coeff33} of Proposition  \ref{carleman_um_controle0} we have
    \begin{equation*}
         s^3 \kappa^4 \int_0^T\int_0^1 e^{-2s\alpha}\xi^3 ( \vert \phi \vert^2 + \vert \psi \vert^2) dxdt \leq C s^7 \kappa^8 \left(\iint_{\omega \times (0,T)} e^{-2s\alpha}\xi^7 \vert \phi \vert^2 dxdt \right),
    \end{equation*}
    and, fixing $\kappa>\kappa_1$, $s>s_1$,
    \begin{equation*}
         \int_0^T \int_0^1 e^{-2s\alpha}\xi^3 ( \vert \phi \vert^2 + \vert \psi \vert^2 ) dxdt \leq  C \iint_{\omega \times (0,T)} e^{-2s\alpha}\xi^7 \vert \phi \vert^2 dxdt.
    \end{equation*}
    
    The function $e^{-2s\alpha}\xi^m$ with $m \in \mathbb{N}$ is bounded by a positive constant $C_0$ from below in the interval $[T/4,3T/4]$, thus
    \begin{align} \label{obser1}
        C_0 \int_{T/4}^{3T/4} \int_0^1  ( \vert \phi \vert^2 + \vert \psi \vert^2 ) dxdt &\leq  C \iint_{\omega \times (0,T)} e^{-2s\alpha}\xi^7 \vert \phi \vert^2 dxdt \nonumber \\
        &\leq  C \iint_{\omega \times (0,T)} \vert \phi \vert^2 dxdt. 
    \end{align}
    
On the other hand, multiplying the first equation of the adjoint system \eqref{sistema_adj0} by $\phi$ and the second by $\psi$, integrating in $[0,1]$ and adding both equations we get
\begin{align*}
-\frac{1}{2} \frac{d}{dt} ( \| \phi \|^2 + \| \psi \|^2 ) &+ C_1( \|  \phi_x \|^2 + \|  \psi_x \|^2) \leq  C_2 (\| \phi \|^2 + \| \psi \|^2)+ \\
& + \int_0^1 \int_0^1 (\lambda_1(t) J_1(x-z) \phi(z,t) \phi(x,t)) dz dx \\
& + \int_0^1 \int_0^1 (\lambda_2(t) J_2(x-z) \psi(z,t) \psi(x,t) ) dz dx,
\end{align*}
where $\|\cdot\|$ denotes the $L^2$-norm on $\Omega$.

Using that the kernels are bounded, the last two integrals are bounded by a term of the form $C_3 \int_0^1 \int_0^1 \left( \phi(z,t)^2 + \phi(x,t)^2 + \psi(z,t)^2 + \psi(x,t)^2 \right) dz dx$, thus
\begin{equation*}
-\frac{1}{2} \frac{d}{dt} ( \| \phi \|^2 + \| \psi \|^2 ) + C_1( \|  \phi_x \|^2 + \|  \psi_x \|^2) \leq C_4 (\| \phi \|^2 + \| \psi \|^2), 
\end{equation*}
and
\begin{equation*}
-\frac{1}{2} \frac{d}{dt} ( \| \phi \|^2 + \| \psi \|^2 ) - C_4 ( \| \phi \|^2 + \| \psi \|^2) \leq 0. 
\end{equation*}

Using Gronwall's inequality we get
$$
\|\phi(0)\|^2 + \|\psi(0)\|^2 \leq e^{2 C_4 T} \left( \|\phi(t)\|^2 + \|\psi(t)\|^2 \right).
$$

From the last inequality and \eqref{obser1}, 
\begin{align*}
 \frac{T}{2} ( \| \phi(0) \|^2 + \| \psi(0) \|^2) &= \int_{T/4}^{3T/4} \left( \| \phi(0) \|^2 + \| \psi(0) \|^2 \right) dt\\
 &\leq e^{2 C_4 T} \int_{T/4}^{3T/4} \left( \| \phi(t) \|^2 + \| \psi(t) \|^2 \right) dt \\
 &\leq e^{2 C_4 T} C \iint_{\omega \times (0,T)} \vert \phi \vert^2 dxdt,    
\end{align*}
and this is the observability inequality.
\end{proof}

As a consequence of the observability, we obtain the null controllability of the linear system \eqref{eq:linearized_fixed_point} stated in Theorem \ref{theorem_linear}. The proof is like in \cite{Limaco-Lobosco-Yapu_24}. Because of this, we present here just a few extra details.

\begin{proof} \textbf{(Proof of Theorem \ref{theorem_linear}.)}  

For any $\varepsilon >0$ and any $\nu \in L^2(Q)$ we consider the functional
$$\mathcal{J}_\varepsilon(\nu) = \iint_{\omega \times (0,T)}  |\nu|^2dxdt + \frac{1}{\varepsilon} \int_0^1 (|y_\nu(T)|^2+|z_\nu(T)|^2)dx,$$
where $y_\nu$ and $z_\nu$ are solutions of \eqref{eq:linearized_fixed_point} associated to the control $\nu$ and fixed initial conditions $y_0$ and $z_0$.

It is not difficult to check that  $\mathcal{J}_\varepsilon$ is strictly convex, continuous and coercive in $L^2(Q)$, then it has a unique minimizer $\nu_\varepsilon \in L^2(Q)$. 

\medskip
\noindent\textbf{1. $\mathcal{J}_\varepsilon$ is strictly convex.}

Let \(\nu_1,\nu_2\in L^2(Q)\) with $\nu_1 \neq \nu_2$ and fix \(\theta\in(0,1)\).  Define
\[
\nu_\theta \;=\;\theta\,\nu_1 + (1-\theta)\,\nu_2.
\]
By linearity of the system \eqref{eq:linearized_fixed_point} in the control \(\nu\), the corresponding solutions satisfy
\[
(y_{\nu_\theta},\,z_{\nu_\theta})
=\theta\,(y_{\nu_1},\,z_{\nu_1})
+(1-\theta)\,(y_{\nu_2},\,z_{\nu_2}).
\]

The value of \(\mathcal{J}_\varepsilon\) at \(\nu_\theta\) is
\begin{equation*}
\begin{aligned}
\mathcal{J}_\varepsilon(\nu_\theta)
&=\iint_{\omega \times (0,T)} \bigl|\theta\,\nu_1+(1-\theta)\,\nu_2\bigr|^2\,dx\,dt\\
&\quad+\;\frac1\varepsilon
\int_0^1\Bigl|\theta\,y_{\nu_1}(T)+(1-\theta)\,y_{\nu_2}(T)\Bigr|^2
+\Bigl|\theta\,z_{\nu_1}(T)+(1-\theta)\,z_{\nu_2}(T)\Bigr|^2
\,dx.
\end{aligned}
\end{equation*}

We have de strictly convexity of squares:
\[
|\theta a+(1-\theta)b|^2
< \theta|a|^2+(1-\theta)|b|^2
\]
for $a \neq b$.
Then, we obtain
\begin{align*}
\iint_{\omega \times (0,T)}|\theta\nu_1+(1-\theta)\nu_2|^2dxdt
&< \theta\iint_{\omega \times (0,T)}|\nu_1|^2dxdt
+(1-\theta)\iint_{\omega \times (0,T)}|\nu_2|^2dxdt,\\
\frac1\varepsilon\int_0^1|\theta\,y_1(T)+(1-\theta)\,y_2(T)|^2dx
&\leq\frac1\varepsilon\Bigl[
\theta\int_0^1|y_1(T)|^2dx
+(1-\theta)\int_0^1|y_2(T)|^2dx
\Bigr]
\end{align*}
and similarly for the \(z\)-term.

Then, we have that $\mathcal{J}_\varepsilon$ is strictly convex

\[
\mathcal{J}_\varepsilon(\theta\nu_1+(1-\theta)\nu_2)
<\theta\,\mathcal{J}_\varepsilon(\nu_1)
+(1-\theta)\,\mathcal{J}_\varepsilon(\nu_2).
\]

\medskip
\noindent\textbf{2. $\mathcal{J}_\varepsilon$ is continuous.}

Let \(\{\nu_{n}\}_{n\ge1}\subset L^{2}( \omega \times (0,T) ) \) be a sequence converging strongly to \(\nu\) in \(  L^{2}( \omega \times (0,T) ) \).

Since
\(\nu\mapsto \iint_{\omega \times (0,T)}|\nu|^{2}\,dx\,dt\) is continuous  on \(L^{2}( \omega \times (0,T) )\), we have
\[
\iint_{\omega \times (0,T)}|\nu|^{2}\,dx\,dt
=\lim_{n\to\infty}\iint_{\omega \times (0,T)}|\nu_{n}|^{2}\,dx\,dt.
\]

By standard energy estimates for the linear PDE system \eqref{eq:linearized_fixed_point}, there is a constant \(C>0\) such that for every control \(\nu\in L^{2}(\omega \times (0,T))\) and initial conditions $y_0$ and $z_0 \in L^2(0,1)$, we have
\[
\|y_{\nu}\|_{C([0,T];L^{2}(0,1))}
+\|z_{\nu}\|_{C([0,T];L^{2}(0,1))}
\;\le\;
C\,(\|\nu\|_{L^{2}(\omega \times (0,T))}+\| y_0 \|_{L^2(0,1)}+\| z_0 \|_{L^2(0,1)}).
\]

Hence
\[
y_{\nu_{n}}(T)\;\to\;y_{\nu}(T)
\quad\text{and}\quad
z_{\nu_{n}}(T)\;\to\;z_{\nu}(T)
\quad\text{strongly in }L^{2}(\Omega).
\]

In other words,
\[
\int_{\Omega}|y_{\nu}(T)|^{2}\,dx
=\lim_{n\to\infty}\int_{\Omega}|y_{\nu_{n}}(T)|^{2}\,dx,
\quad
\int_{\Omega}|z_{\nu}(T)|^{2}\,dx
=\lim_{n\to\infty}\int_{\Omega}|z_{\nu_{n}}(T)|^{2}\,dx.
\]

Putting together,
\[
\begin{aligned}
\mathcal{J}_\varepsilon(\nu)
&=\int_{Q}|\nu|^{2}\,dx\,dt
+\frac1\varepsilon\int_{\Omega}\bigl(|y_{\nu}(T)|^{2}+|z_{\nu}(T)|^{2}\bigr)\,dx\\
&=\lim_{n\to\infty}\int_{Q}|\nu_{n}|^{2}\,dx\,dt
\;+\;\frac1\varepsilon\lim_{n\to\infty}\int_{\Omega}\bigl(|y_{\nu_{n}}(T)|^{2}+|z_{\nu_{n}}(T)|^{2}\bigr)\,dx\\
&=\lim_{n\to\infty}\mathcal{J}_\varepsilon(\nu_{n}).
\end{aligned}
\]
This shows \(\mathcal{J}_\varepsilon\) is continuous.

\medskip
\noindent\textbf{3. $\mathcal{J}_\varepsilon$ is coercive.}

Coercivity is evident, due to the term $\iint_{\omega \times (0,T)}  |\nu|^2dxdt$.

Then, $\mathcal{J}_\varepsilon$ has a unique minimizer $\nu_\varepsilon \in L^2(\omega \times (0,T))$. We will denote $y_\varepsilon$ and $z_\varepsilon$ the associated solutions of \eqref{eq:linearized_fixed_point}. The Gâteaux derivative of the functional $\mathcal{J}_\varepsilon$ at the direction $\mu\in L^2(\omega \times (0,T))$ is given by
$$\mathcal{J}_\varepsilon'(\nu_\varepsilon)(\mu)= 2 \iint_{\omega \times (0,T)} \nu_\varepsilon \mu \, dxdt + \frac{2}{\varepsilon} \int_\Omega \left( y_\varepsilon(T) \hat y (T) + z_\varepsilon(T) \hat z(T) \right) dx,
$$
where $(\hat y, \hat z)$ are solutions of the system \eqref{eq:linearized_fixed_point} associated to $\nu  = \mu$ and initial conditions $y_0(x)=z_0(x)=0$. More precisely,

\begin{equation}\label{hat_y_z}
\begin{cases}
\hat y_t=a_1  \hat y_{xx}+b_1 \hat y_x + c_1 \hat y +\lambda_1\int_0^1 J_1(\zeta-x)\hat y(\zeta,t) d\zeta + \tilde a(x,t)\hat y + \tilde b(x,t)\hat z + \mu 1_\omega, \ \ \mbox{ in } Q,\\
\hat z_t=a_2  \hat z_{xx}+b_2 \hat z_x +c_2\hat z +\lambda_2\int_0^1J_2(\zeta-x)\hat z(\zeta,t)d\zeta+ \tilde c(x,t)\hat y + \tilde d(x,t)\hat z, \ \ \mbox{ in } Q,\\
\hat y(0,t) = \hat y(1,t)=\hat z(0,t) = \hat z(1,t)  = 0 ,\ \ 
\mbox{ for } 0<t<T,\\
\hat y(x,0) = 0, \hat z(x,0) = 0, \mbox{ in }  \Omega.
\end{cases}
\end{equation}

If the minimum is reached at $\nu_\varepsilon$, the Gateaux derivative at this point is zero for any $\mu \in L^2(\omega \times (0,T))$.

\begin{equation}\label{eq:funcional_derivada_direc_nula}
0= 2 \iint_{\omega \times (0,T)} \nu_\varepsilon \mu \, dxdt + \frac{2}{\varepsilon} \int_\Omega \left( y_\varepsilon(T) \hat y (T) + z_\varepsilon(T) \hat z(T) \right) dx.    
\end{equation}

We use the adjoint equation \eqref{sistema_adj0} with final conditions
\begin{equation}\label{final_sistema_acessorio} 
\phi_T = -\frac{1}{\varepsilon} u_\varepsilon(T), \quad \psi_T = -\frac{1}{\varepsilon} v_\varepsilon (T).
\end{equation}
For simplicity, we will call the associated solution $(\phi, \psi)$.

Multiplying the first equation of \eqref{hat_y_z} by $\phi$, the second by $\psi$, integrating in $Q$ and adding both equations, we have
\begin{align*}
&\int_Q \left( \hat y_t-a_1  \hat y_{xx}-b_1 \hat y_x - c_1 \hat y -\lambda_1\int_0^1 J_1(\zeta-x)\hat y(\zeta,t) d\zeta - \tilde a(x,t)\hat y - \tilde b(x,t)\hat z \right) \phi \, dxdt +\\
&+ \int_Q \left( \hat z_t-a_2  \hat z_{xx}-b_2 \hat z_x -c_2\hat z -\lambda_2\int_0^1J_2(\zeta-x)\hat z(\zeta,t)d\zeta- \tilde c(x,t)\hat y - \tilde d(x,t)\hat z \right)\psi \, dxdt \\
&= \int_Q \mu \phi 1_\omega \, dxdt.
\end{align*}

Integrating by parts and using that $\phi$ and $\psi$ verify the adjoint system \eqref{sistema_adj0} and the final conditions \eqref{final_sistema_acessorio},

$$\int_\Omega \hat y(T) (-\frac{1}{\varepsilon} y_\varepsilon (T)) \, dx+ \int_\Omega \hat z(T) (-\frac{1}{\varepsilon} z_\varepsilon (T)) \, dx = \int_Q \mu \phi_\varepsilon 1_\omega \, dxdt.$$

Thus, using \eqref{eq:funcional_derivada_direc_nula}, 
$$\iint_{\omega \times (0,T)} \nu_\varepsilon \mu \, dxdt = \iint_{\omega \times (0,T)}  \mu \phi_\varepsilon 1_\omega \, dxdt, $$
for any $\mu \in L^2(\omega \times (0,T))$. This implies that 
\begin{equation} \label{eq:formulacontrol}
\nu_\varepsilon = \phi_\varepsilon 1_\omega.
\end{equation}

On the other hand, considering the linearized system \eqref{eq:linearized_fixed_point} associated with $y_\varepsilon$, $z_\varepsilon$ and $\nu_\varepsilon$,  multiplying the first equation of \eqref{eq:linearized_fixed_point} by $\phi_\varepsilon$, the second equation of \eqref{eq:linearized_fixed_point} by $\psi_\varepsilon$, integrating in $Q$, adding and using \eqref{eq:formulacontrol}, we have
\begin{align*}
&\int_Q \left(  y_{\varepsilon,t} - a_1  y_{\varepsilon,xx} - b_1 y_{\varepsilon,x} - c_1y_\varepsilon - \right. \\
&\left. - \lambda_1\int_0^1J_1(\zeta-x)y_\varepsilon (\zeta,t)dz - \tilde a(x,t) y_\varepsilon - \tilde b(x,t) z_\varepsilon \right) \phi_\varepsilon \, dxdt +\\ 
&+ \int_Q \left(  z_{\varepsilon,t} - a_2  z_{\varepsilon,xx} - b_2z_{\varepsilon,x} - c_2z_\varepsilon -\right.\\
&\left. - \lambda_2\int_0^1J_2(\zeta-x)z_\varepsilon (\zeta,t)dz - \tilde c(x,t)y_\varepsilon - \tilde d(x,t)z_\varepsilon \right) \psi_\varepsilon \, dxdt = \\
& = \int_Q |\phi_\varepsilon|^2 1_\omega \, dxdt.
\end{align*}

Integrating by parts,  we obtain
$$(y_\varepsilon, \phi_\varepsilon)_{L^2(Q)} \vert_0^T + (z_\varepsilon,\psi_\varepsilon)_{L^2(Q)} \vert_0^T = \int_{\omega \times (0,T)} |\phi_\varepsilon|^2 \, dxdt.$$

Thus, replacing the initial and final conditions,
$$ \int_{\omega \times (0,T)} |\phi_\varepsilon|^2 \, dxdt + \frac{1}{\varepsilon} \int_\Omega |y_\varepsilon(T)|^2  \, dx + \frac{1}{\varepsilon} \int_\Omega |z_\varepsilon(T)|^2 \, dx =-(y_0,\phi_\varepsilon(0)) - (z_0,\psi_\varepsilon(0)).$$

That is, 
$$\int_{\omega \times (0,T)} |\phi_\varepsilon|^2 dxdt + \frac{1}{\varepsilon} \int_\Omega |y_\varepsilon(T)|^2 dx + \frac{1}{\varepsilon} \int_\Omega |z_\varepsilon(T)|^2 dx \leq$$
$$\leq (\Vert y_0 \Vert ^2+ \Vert z_0 \Vert^2)^{1/2} ( \Vert \phi_\varepsilon(0) \Vert^2 + \Vert \psi_\varepsilon(0) \Vert^2 )^{1/2}.$$

Using the observability inequality, and the elementary inequality $ab \leq \varepsilon a^2+\frac{b^2}{4\varepsilon}$, we have
$$ \int_{\omega \times (0,T)} |\phi_\varepsilon|^2 \, dxdt+ \frac{1}{\varepsilon} \int_\Omega |y_\varepsilon(T)|^2 \, dx + \frac{1}{\varepsilon} \int_\Omega |z_\varepsilon(T)|^2 \, dx \leq$$
$$\leq \frac{C}{2} (\Vert y_0\Vert^2+\Vert z_0\Vert^2) +  \frac{1}{2} \int_{\omega \times (0,T)} |\phi_\varepsilon|^2 \, dxdt.$$

Thus, using \eqref{eq:formulacontrol},
\begin{equation} \label{eq:limitacao_controle}
\frac{1}{2} \int_{\omega \times (0,T)} |\nu_\varepsilon|^2 \, dxdt + \frac{1}{\varepsilon} \int_\Omega |y_\varepsilon(T)|^2 \, dx + \frac{1}{\varepsilon} \int_\Omega |z_\varepsilon(T)|^2  \, dx\leq \frac{C}{2} (\Vert y_0\Vert^2+\Vert z_0\Vert^2).
\end{equation}

Kakutani's compactness theorem implies that there exists a subnet
of $\nu_\varepsilon$, with $\varepsilon \rightarrow 0$, denoted with the same symbol, such that
$\nu_\varepsilon \rightharpoonup \nu$ (weakly) in $L^2(\omega \times (0,T))$.

We use the energy estimates
\begin{equation*}
\begin{split}
\Vert  y_\varepsilon \Vert_{L^2(0,T,H_0^1(\Omega))} + &\Vert  z_\varepsilon \Vert_{L^2(0,T,H_0^1(\Omega))} + \Vert y_{\varepsilon,t} \Vert_{L^2(0,T,H^{-1} (\Omega))} + \Vert  z_{\varepsilon,t} \Vert_{L^2(0,T,H^{-1} (\Omega))} \\
&\leq C_1 ( \Vert  y_0 \Vert_{L^2(\Omega)}+\Vert  z_0\Vert_{L^2(\Omega)}+ \Vert  \nu_\varepsilon \Vert_{L^2(Q)} ), 
\end{split}
\end{equation*}
and, using \eqref{eq:limitacao_controle}, we have
\begin{equation}
\label{eq:energy_est_L2}
\begin{split}
\Vert  y_\varepsilon \Vert_{L^2(0,T,H_0^1(\Omega))} + &\Vert  z_\varepsilon \Vert_{L^2(0,T,H_0^1(\Omega))} + \Vert y_{\varepsilon,t} \Vert_{L^2(0,T,H^{-1} (\Omega))} + \Vert  z_{\varepsilon,t} \Vert_{L^2(0,T,H^{-1} (\Omega))} \\
&\leq C_2 ( \Vert  y_0 \Vert_{L^2(\Omega)}+\Vert  z_0\Vert_{L^2(\Omega)} ).
\end{split}
\end{equation}

Thus, we get the weak convergence of the subnets $y_{\varepsilon} \rightharpoonup y$, $ z_{\varepsilon} \rightharpoonup z$ in $L^2(0,T,H_0^1(\Omega))$ and
$y_{\varepsilon,t} \rightharpoonup y_t$, $z_{\varepsilon,t} \rightharpoonup z_t$ in $L^2(0,T,H^{-1}(\Omega))$. 

Moreover, \eqref{eq:energy_est_L2} implies by \cite{Evans} (Sec 5.9, Thm 3) that, after a possible redefinition on a set of measure zero, $y_\varepsilon$, $z_\varepsilon$ belong to $C([0,T],L^2(\Omega))$ and
\begin{equation*}
\Vert y_\varepsilon \Vert_{C([0,T],L^2(\Omega)} + \Vert z_\varepsilon \Vert_{C([0,T],L^2(\Omega)} \leq C_3 ( \Vert y_0 \Vert_{L^2(\Omega)}+ \Vert z_0 \Vert_{L^2(\Omega)}).
\end{equation*}

Let us recall the following result:
\begin{proposition}\cite{breziscazenave}(Prop A.2.46(i))
Let $X \hookrightarrow Y$ be two Banach spaces and let $f_n$ be a bounded sequence of $L^\infty(I,X) \cap W^{1,r}(I,Y)$, for some $r>1$. If $X$ is reflexive and if $I$ is bounded, then the following hold:
there exists $f \in L^\infty(I,X)$, $f : \bar I \to X$ being weakly continuous, and a subsequence $n_k$ such that $f_{n_k}(t) \rightharpoonup f(t)$ as $k \to \infty$, for every $t \in \bar I$. In particular,
$$
\int_I f_n(t) \phi(t)dt \rightharpoonup \int_I f(t)\phi(t) dt
$$
as $n \to \infty$ for every $\phi \in C_c(I)$.
\end{proposition}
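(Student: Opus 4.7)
The plan is to combine a Hölder-type equicontinuity in $Y$ coming from the $W^{1,r}(I,Y)$ bound with a diagonal extraction argument using reflexivity of $X$ and the $L^\infty(I,X)$ bound. First, since $r>1$, Hölder's inequality gives
$$\|f_n(t) - f_n(s)\|_Y \;\leq\; |t-s|^{1-1/r}\,\|f_n'\|_{L^r(I,Y)} \;\leq\; C\,|t-s|^{1-1/r},$$
so, after modification on a null set, the $f_n$ are uniformly Hölder continuous as maps $\bar I \to Y$ with a constant independent of $n$.

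Next I would fix a countable dense set $\{t_k\} \subset \bar I$ and exploit reflexivity: each $\{f_n(t_k)\}_n$ is bounded in $X$, and a Cantor diagonal procedure produces a subsequence $f_{n_j}$ with $f_{n_j}(t_k) \rightharpoonup g_k$ in $X$ for every $k$. To extend $g$ to all of $\bar I$, for $t \in \bar I$ and $t_{k_m} \to t$ the Hölder estimate together with weak lower semicontinuity of $\|\cdot\|_Y$ gives
$$\|g_{k_m} - g_{k_l}\|_Y \;\leq\; \liminf_{j} \|f_{n_j}(t_{k_m}) - f_{n_j}(t_{k_l})\|_Y \;\leq\; C\,|t_{k_m} - t_{k_l}|^{1-1/r},$$
so $\{g_{k_m}\}$ is Cauchy in $Y$; call the limit $f(t)$. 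Uniform boundedness $\|g_{k_m}\|_X \leq C$ combined with reflexivity yields a weakly convergent sub-subsequence in $X$ whose weak $X$-limit coincides with $f(t)$ in $Y$, so by injectivity of $X \hookrightarrow Y$ we obtain $f(t) \in X$ and $g_{k_m} \rightharpoonup f(t)$ in $X$. To upgrade this to $f_{n_j}(t) \rightharpoonup f(t)$ in $X$ for every $t$, I take an arbitrary weak-$X$ cluster point $h$ of $f_{n_j}(t)$ and, for $\eta \in Y^* \hookrightarrow X^*$, estimate
$$|\langle \eta, h - g_k\rangle| \;=\; \lim_{l}\,|\langle \eta, f_{n_{j_l}}(t) - f_{n_{j_l}}(t_k)\rangle| \;\leq\; C\,\|\eta\|_{Y^*}\,|t-t_k|^{1-1/r};$$
sending $t_k \to t$ identifies $h$ with $f(t)$ in $Y$, hence in $X$ by injectivity. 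Weak continuity of $f$ on $\bar I$ then follows by passing the Hölder bound to the limit and invoking reflexivity once more, while the integral convergence follows from dominated convergence applied to $\int_I \phi(t)\langle \eta, f_{n_j}(t)\rangle\,dt$ for $\phi \in C_c(I)$ and $\eta \in X^*$, with dominating constant $\|\eta\|_{X^*}\|\phi\|_\infty \sup_n \|f_n\|_{L^\infty(I,X)}$.

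The main obstacle is the identification $h = f(t)$: one only has strong $Y$-convergence $g_k \to f(t)$ and weak-$X$ boundedness of $f_{n_j}(t)$, so separation of points must be done by testing with $Y^* \subset X^*$ rather than the full $X^*$, together with injectivity of the embedding $X \hookrightarrow Y$. This is precisely where reflexivity of $X$ is indispensable; the remaining steps are standard functional-analytic bookkeeping.
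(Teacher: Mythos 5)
Your proof is correct, but note that the paper itself offers no proof of this statement: it is quoted verbatim from Brezis--Cazenave (Prop.\ A.2.46(i)) as an external tool used in the passage to the limit at $t=T$, so there is nothing in the paper to compare your argument against. What you have written is essentially the standard proof of that appendix lemma: Hölder equicontinuity in $Y$ from the $W^{1,r}(I,Y)$ bound with $r>1$, a Cantor diagonal extraction over a countable dense set using weak sequential compactness of bounded sets in the reflexive space $X$, identification of weak $X$-limits through the (injective, continuous) embedding into $Y$ via $Y^*\hookrightarrow X^*$, and the subsequence principle to upgrade cluster-point uniqueness to convergence of the whole subsequence at every $t$. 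The one step you pass over silently is that the diagonal argument needs $\{f_n(t_k)\}_n$ bounded in $X$ at \emph{every} point $t_k$, whereas the $L^\infty(I,X)$ hypothesis only controls $\|f_n(t)\|_X$ for a.e.\ $t$; for the $Y$-continuous representative this is repaired by approximating $t_k$ by points where the essential bound holds and using exactly the weak-compactness-in-$X$ plus identification-in-$Y$ device you deploy later, so it is an omission of bookkeeping rather than of an idea. A similar remark applies to measurability of $f$ as an $X$-valued map (needed for $f\in L^\infty(I,X)$), which follows from weak continuity together with the Pettis theorem. With those two routine points made explicit, your argument is a complete and faithful reconstruction of the cited result.
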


Using this for $X = L^2(\Omega)$, $Y=H^{-1}(\Omega)$ and  $r=2$, we get the existence of $y \in L^\infty(0,T,L^2(\Omega))$ and $z \in L^\infty(0,T,L^2(\Omega))$ such that, for a subnet,     
$y_\varepsilon (t) \rightharpoonup y(t)$ and $z_\varepsilon(t) \rightharpoonup z(t)$ in $L^2(\Omega)$ for every $t \in [0,T]$. In particular we have that $y_\varepsilon(T) \rightharpoonup y(T)$ and $z_\varepsilon(T) \rightharpoonup z(T)$ weakly in $L^2(\Omega)$.   

\color{black}
As a consequence, there exist subnets such that
\begin{align} \label{eq:weaklimits}
& y_{\varepsilon,t} \rightharpoonup y_t, \quad z_{\varepsilon,t} \rightharpoonup z_t, \qquad \text{ in } L^2(0,T,H^{-1}(\Omega)) \\
& y_{\varepsilon} \rightharpoonup y, \quad z_{\varepsilon} \rightharpoonup z, \qquad \text{ in } L^2(0,T,H_0^1(\Omega)) \\
& y_\varepsilon(T) \rightharpoonup y(T), \quad z_\varepsilon(T) \rightharpoonup z(T), \qquad \text{ in } L^2(\Omega),
\end{align}
and from  \eqref{eq:limitacao_controle},
\begin{equation} \label{eq:stronglimit}
y_\varepsilon(T) \rightarrow 0, \quad z_\varepsilon(T) \rightarrow 0, \quad \text{ in } L^2(\Omega), \text{ because } \varepsilon \to 0.
\end{equation}

From unicity of the weak limit, we have $u(T)=0$ and $v(T)=0$. 

Using \eqref{eq:weaklimits}-\eqref{eq:stronglimit}, we can pass to the limit in the weak form of the approximate system. Thus, the limit functions $y$, $z$ satisfy the weak form of the original system \eqref{eq:linearized_fixed_point}, with $y(T)=0$ and $z(T)=0$ and with $y, \ z \in L^2(0,T,H^1_0(\Omega))$ and $y_t, \ z_t \in L^2(0,T,H^{-1}(\Omega))$.

The estimate \eqref{ControlEstimate} follows immediately from \eqref{eq:limitacao_controle}.
\end{proof}

\section{Proof of Theorem \ref{maintheorem0}}
\label{sec:proof_nonlinear0}

To prove the main theorem we will use the following version of Kakutani's theorem: 
\begin{teo}\label{teo:kaku}(Kakutani's fixed point theorem) 
Let $\X$ be a complete Hausdorff locally convex linear topological space. Let $B \subset \X$ be nonempty, convex and compact. Let $\Phi: B \to 2^{B}$ be a multivalued map that has closed graph and such that for any $(\bar y, \bar z) \in B$, $\Phi(\bar y,\bar z)$ is nonempty, convex and compact. Then, the set of fixed points of $\Phi$ is nonempty.
\end{teo}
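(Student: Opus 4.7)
The strategy is the classical Fan--Glicksberg route: first establish the statement in finite dimensions by reducing to Brouwer via a continuous $\epsilon$-approximation of $\Phi$, then bootstrap to the locally convex setting by approximating the compact set $B$ with finite-dimensional convex subsets through Schauder-type projections, and finally pass to a limit along a net of shrinking neighborhoods of $0$, exploiting the closed-graph hypothesis.

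\emph{Stage 1 (finite-dimensional case).} Assume first that $\X=\mathbb{R}^n$. A multivalued map into the compact set $B$ with compact values and closed graph is upper semicontinuous, so for each $\epsilon>0$ one may construct a continuous single-valued approximation $f_\epsilon:B\to B$ as follows: cover $B$ by finitely many open balls $U_1,\dots,U_N$ of radius $\epsilon$ centered at points $x_i$, choose $y_i\in\Phi(x_i)$, take a continuous partition of unity $\{\lambda_i\}$ subordinate to $\{U_i\}$, and set $f_\epsilon(x)=\sum_i \lambda_i(x)y_i$. Brouwer's fixed point theorem furnishes $x_\epsilon=f_\epsilon(x_\epsilon)$. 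Letting $\epsilon_k\to 0$ and extracting a convergent subsequence $x_{\epsilon_k}\to x^\ast$, the fixed point identity expresses $x_{\epsilon_k}$ as a convex combination of elements $y_i$ lying in $\Phi(x_i)$ for $x_i$ near $x_{\epsilon_k}$; the convexity of $\Phi(x^\ast)$ and the closed graph property then force $x^\ast\in\Phi(x^\ast)$.

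\emph{Stage 2 (finite-dimensional reduction in $\X$).} For each balanced open convex neighborhood $V$ of $0$ in $\X$, cover the compact set $B$ by finitely many translates $x_i^V+V$, $i=1,\dots,N(V)$, and let $B_V=\mathrm{co}\{x_1^V,\dots,x_{N(V)}^V\}\subset B$, a finite-dimensional compact convex set. Define a Schauder-type projection $\pi_V:B\to B_V$ via a partition of unity, $\pi_V(x)=\sum_i \lambda_i(x)x_i^V$, satisfying $x-\pi_V(x)\in V$ for every $x\in B$. Introduce the approximate multivalued map $\Phi_V:B_V\to 2^{B_V}$ by
\[
\Phi_V(x)\;=\;\mathrm{co}\bigl(\pi_V(\Phi(x))\bigr),
\]
which is nonempty, convex and compact-valued, and whose graph is closed by continuity of $\pi_V$ combined with the closed-graph hypothesis on $\Phi$ over the compact $B$. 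Stage 1 applied to $\Phi_V$ delivers $x_V\in\Phi_V(x_V)$, and by Carath\'eodory a representation $x_V=\sum_j \mu_j \pi_V(y_j^V)$ with $y_j^V\in\Phi(x_V)$; by convexity of $\Phi(x_V)$ one obtains a single $y_V\in\Phi(x_V)$ with $x_V=\pi_V(y_V)$, hence $x_V-y_V\in V$.

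\emph{Stage 3 (limit along the net).} Order the balanced open convex neighborhoods of $0$ by reverse inclusion and consider the net $\{(x_V,y_V)\}$ in the compact set $B\times B$. By compactness, a subnet converges, $(x_{V_\alpha},y_{V_\alpha})\to(x^\ast,y^\ast)$, and since $x_V-y_V\in V$ with $V$ shrinking to $\{0\}$ in the locally convex topology, $x^\ast=y^\ast$. The closed-graph property of $\Phi$ applied to the convergent subnet of graph elements $(x_{V_\alpha},y_{V_\alpha})$ then yields $x^\ast\in\Phi(x^\ast)$, proving the set of fixed points is nonempty. The main obstacle I anticipate is ensuring that $\Phi_V$ genuinely has closed graph and convex compact values so that Stage 1 applies cleanly (the convex-hull operation is needed precisely because $\pi_V$ is not affine and $\pi_V(\Phi(x))$ need not be convex), and handling the limit with nets rather than sequences, which is unavoidable since $\X$ is only assumed locally convex and not necessarily metrizable.
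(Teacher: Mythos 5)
Your proof is correct in outline and is the classical Fan--Glicksberg argument; the paper, by contrast, does not prove the theorem at all but simply notes that it can be derived from Theorem 11.9 of the cited fixed-point monograph \cite{Kakutani}, so you are supplying a self-contained proof where the authors give a citation. Your three-stage structure (Brouwer plus a partition-of-unity selection in finite dimensions, Schauder projections onto finite-dimensional convex subsets, and a net limit using Hausdorffness and the closed graph) is the standard and correct route, and the points you flag as delicate --- taking convex hulls because $\pi_V$ does not preserve convexity, and using nets rather than sequences since $\X$ need not be metrizable --- are exactly the right ones. One step, however, is stated incorrectly: from $x_V=\sum_j\mu_j\,\pi_V(y_j^V)$ with $y_j^V\in\Phi(x_V)$ you cannot conclude that $x_V=\pi_V(y_V)$ for a single $y_V\in\Phi(x_V)$, precisely because $\pi_V$ is not affine (as you yourself observe when justifying the convex-hull operation). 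The conclusion you actually need survives by a direct computation: setting $y_V=\sum_j\mu_j y_j^V\in\Phi(x_V)$ by convexity of the values, and using that $y_j^V-\pi_V(y_j^V)\in V$ for each $j$ while $V$ is convex and balanced, one gets $x_V-y_V=\sum_j\mu_j\bigl(\pi_V(y_j^V)-y_j^V\bigr)\in V$, which is all that Stage 3 uses. With that one-line repair the argument is complete.
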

This theorem can be easily derived from the Theorem 11.9 (page 138) of the book \cite{Kakutani}. In our context, the graph of the multivalued map $\Phi$ is defined by
$$
Graph(\Phi) = \left\{ \left( (\bar{y}, \bar{z}), (u, v) \right) \in B \times B : (u, v) \in \Phi(\bar{y}, \bar{z}) \right\}.
$$

In the present article the specific functional spaces are defined as follows:
\begin{equation}\label{def:spaces}
\X:=L^{2}(Q)\times L^{2}(Q),\qquad 
W':=\bigl\{u\in L^{2}\bigl(0,T;H^{1}_{0}(\Omega)\bigr):\,u_t\in L^{2}\bigl(0,T;H^{-1}(\Omega)\bigr)\bigr\},
\end{equation}
and denote by $\|\cdot\|_{W'}$ the natural norm of $W'$. 
$$   \|u\|_{W'}^2 := \|u\|_{L^2(0,T;H_0^1(\Omega))}^2 + \|u_t\|_{L^2(0,T;H^{-1}(\Omega))}^2.
$$

We also introduce the ball
\begin{equation}\label{def:B_R}
B:=B_{W'}(0,R):=\bigl\{(y,z)\in W'\times W':\|y\|_{W'}^{2}+\|z\|_{W'}^{2}\le R^{2}\bigr\},
\end{equation}
where $R>0$ will be chosen later and the norm of $W' \times W'$ is defined by
\[
\|(y,z)\|_{W' \times W'}^{2}
  \;=\;
    \|y\|_{L^{2}(0,T;H^{1}_{0}(\Omega))}^{2}
  + \|y_{t}\|_{L^{2}(0,T;H^{-1}(\Omega))}^{2}
  + \|z\|_{L^{2}(0,T;H^{1}_{0}(\Omega))}^{2}
  + \|z_{t}\|_{L^{2}(0,T;H^{-1}(\Omega))}^{2}.
\]


The multivaluated map is defined on $B \subset \X$ by
$$\Phi: B \to 2^{B},$$
such that for fixed initial conditions in system \eqref{eq:linearized_fixed_point} and for the constant $C$ of estimate \eqref{ControlEstimate}, we have:
\begin{equation}
\begin{split}
\Phi(\bar y,\bar z) = \{ & (y,z) \in \X \vert \text{ there exists }  \nu\in L^2(\omega\times(0,T)) \text{ such that } (y,z) \text{ is solution of } \eqref{eq:linearized_fixed_point}\\
&\text{ with }  \ y(T)=0, \ z(T)=0, \  \|\nu\|_{L^2(\omega\times(0,T))} \leq C (\|y_0\|_{L^2(\Omega)} + \|z_0\|_{L^2(\Omega)}) \}.
\end{split}
\end{equation}

Here, it is crucial that the constant $C$ does not depend on the element $(\bar y,\bar z)$ since this uniformity of the constant will be crucial in the proof.

If $(y,z)$ is fixed point of $\Phi$, i.e. $(y,z) \in \Phi(y,z)$, then $(y,z)$ is a solution of
\begin{equation}
\label{system_fixed_point}
\begin{cases}
y_t=a_1  y_{xx}+b_1 y_x + c_1 y +\lambda_1\int_0^1 J_1(\zeta-x)y(\zeta,t) d\zeta + a(y,z)y + b(y,z)z + \nu 1_\omega, \ \ \mbox{ in } Q,\\
z_t=a_2  z_{xx}+b_2 z_x +c_2z +\lambda_2\int_0^1J_2(\zeta-x)z(\zeta,t)d\zeta+ c(y,z)y + d(y,z)z, \ \ \mbox{ in } Q,\\
y(0,t) = y(1,t)=z(0,t) = z(1,t)  = 0 ,\ \ 
\mbox{ for } 0<t<T,\\
y(x,0) = y_0(x), z(x,0) = z_0(x), \mbox{ in }  \Omega,
\end{cases}
\end{equation}
which, from the decomposition \eqref{eq:decomp_F_G}, is our original system \eqref{sistema0}. Therefore, the system will be null controllable.

\begin{proof}(Theorem \ref{maintheorem0})

Since $H_0^1(\Omega) \subset L^2(\Omega)$ compactly, and $L^2(\Omega) \subset H^{-1}(\Omega)$ continuously, then by the theorem of Aubin-Lions, $W' \subset L^2(0,T,L^2(\Omega))$ compactly. 
As a product space, we also get that $ W'\times W' \subset L^2(Q) \times L^2(Q)$ compactly. Thus, $\mathbf{B}$ \textbf{is compact in} $\X$, i.e. if $(y_n,z_n) \in B$ is a bounded sequence then there exist a subsequence, denoted also by $(y_n,z_n)$, such that
$$
(y_n,z_n) \to (y,z) \qquad \text{in} \qquad  \X.
$$ 

Furthermore $\mathbf{B}$ \textbf{ is convex.} Indeed, if $(y_1,z_1)$ and $(y_2,z_2)$ belong to $B$, we have
$$
\|(y_1,z_1)\|_{W'\times W'} \leq R, \qquad \|(y_2,z_2)\|_{W'\times W'} \leq R, 
$$
and then, by the properties of the norm,
$$
\|\lambda(y_1,z_1) + (1-\lambda)(y_2,z_2)\|_{W'\times W'} \leq \lambda \|(y_1,z_1)\|_{W'\times W'} + (1-\lambda) \|(y_2,z_2)\|_{W'\times W'} \leq R.
$$

Obviously $\mathbf{B}$ \textbf{is non-empty.}

Now, we prove that for any $(\bar y, \bar z) \in B$ the set $\Phi(\bar y, \bar z)$ is convex.

Let $(y_1,z_1)$ and $(y_2,z_2)$ belong to $\Phi(\bar y, \bar z)$. Thus, for $i=1,2$,
\begin{equation*}
\begin{cases}
y_{i,t}=a_1  y_{i,xx}+b_1 y_{i,x} + c_i y_i +\lambda_1\int_0^1 J_1(\zeta-x)y_i(\zeta,t) d\zeta + a(\bar y,\bar z)y_i + b(\bar y,\bar z)z_i + \nu_i 1_\omega, \ \ &\mbox{ in } Q,\\
z_{i,t}=a_2  z_{i,xx}+b_2 z_{i,x} +c_2 z_i +\lambda_2\int_0^1J_2(\zeta-x)z_i(\zeta,t)d\zeta+ c(\bar y,\bar z)y_i + d(\bar y,\bar z)z_i , \ \ &\mbox{ in } Q,\\
y_i(0,t) = y_i(1,t)=z_i(0,t) = z_i(1,t)  = 0 ,\ \ 
\mbox{ for } 0<t<T,\\
y_i(x,0) = y_0(x), z_i(x,0) = z_0(x), \mbox{ in }  \Omega,
\end{cases}
\end{equation*}
and
$$
y_i(T)=0, \qquad z_i(T)=0,
$$
$$
\nu_i \in L^2(\omega\times(0,T)), \qquad \|\nu_i\|  \leq C (\|y_0\|_{L^2(\Omega)} + \|z_0\|_{L^2(\Omega)}).
$$

The convex combination $(y,z) = \lambda(y_1,z_1)+(1-\lambda)(y_2,z_2)$ is a solution of
\begin{equation*}
\begin{cases}
y_t=a_1  y_{xx}+b_1 y_x + c_1 y +\lambda_1\int_0^1 J_1(\zeta-x)y(\zeta,t) d\zeta + a(\bar y,\bar z)y + b(\bar y,\bar z)z = (\lambda \nu_1 + (1-\lambda) \nu_2) 1_\omega, \ \ &\mbox{ in } Q,\\
z_t=a_2  z_{xx}+b_2 z_x +c_2z +\lambda_2\int_0^1J_2(\zeta-x)z(\zeta,t)d\zeta+ c(\bar y,\bar z)y + d(\bar y,\bar z)z = 0, \ \ &\mbox{ in } Q,\\
y(0,t) = y(1,t)=z(0,t) = z(1,t)  = 0 ,\ \ 
\mbox{ for } 0<t<T,\\
y(x,0) = y_0(x), z(x,0) = z_0(x), \mbox{ in }  \Omega,
\end{cases}
\end{equation*}
and
$$
y(T)=0 \qquad \text{and} \qquad z(T)=0.
$$

Taking $\nu=\lambda \nu_1 + (1-\lambda)\nu_2 \in L^2(\omega\times (0,T))$, we have
$$
\|\nu\|_{L^2(\omega\times (0,T))} \leq \lambda\|\nu_1\|+(1-\lambda)\|\nu_2\|  \leq C (\|y_0\|_{L^2(\Omega)} + \|z_0\|_{L^2(\Omega)}).
$$
This implies that $\mathbf{\Phi(\bar y,\bar z)}$ \textbf{is convex.}

Now we show that for every $(\bar y, \bar z) \in B$ we have that  $\Phi(\bar y,\bar z)$ is compact. Let $(y_n,z_n)$ denote a sequence in $\Phi(\bar y,\bar z)$. Then, $(y_n,z_n)$ is a solution of
\begin{equation}\label{sistema3}
\begin{cases}
y_{n,t}=a_1  y_{n,xx}+b_1 y_{n,x} + c_i y_n +\lambda_1\int_0^1 J_1(\zeta-x)y_n(\zeta,t) d\zeta + a(\bar y,\bar z)y_n + b(\bar y,\bar z)z_n + \nu_n 1_\omega,   &\mbox{ in } Q,\\
z_{n,t}=a_2  z_{n,xx}+b_2 z_{n,x} +c_2 z_n +\lambda_2\int_0^1J_2(\zeta-x)z_n(\zeta,t)d\zeta+ c(\bar y,\bar z)y_n + d(\bar y,\bar z)z_n ,   &\mbox{ in } Q,\\
y_n(0,t) = y_n(1,t)=z_n(0,t) = z_n(1,t)  = 0 ,\ \ 
\mbox{ for } 0<t<T,\\
y_n(x,0) = y_0(x), z_n(x,0) = z_0(x), \mbox{ in }  \Omega,
\end{cases}
\end{equation}
and
$$
y_n(T)=0 \qquad \text{, } \qquad z_n(T)=0,
$$
$$
\nu_n \in L^2(\omega\times(0,T)), \qquad \|\nu_n\| \leq C (\|y_0\|_{L^2(\Omega)} + \|z_0\|_{L^2(\Omega)}).
$$

We have to show that there exists a subsequence, denoted also $(y_n,z_n)$ such that $(y_n,z_n) \to (y,z)$ strongly in $\X$.

By the estimate of $\nu_n$ and by the energy estimates like \eqref{eq:energy_est_L2} of the linear system \eqref{sistema3}, there is a subsequence, denoted also $(y_n,z_n)$, such that
\begin{equation}
\label{eq:subseqs0}
\begin{split}
y_n \rightharpoonup y, \quad z_n \rightharpoonup z, \quad &\text{weakly in} \quad L^2(0,T,H_0^1(\Omega)),\\
y_{n,t} \rightharpoonup y_t, \quad z_{n,t} \rightharpoonup z_t, \quad &\text{weakly in} \quad L^2(0,T,H^{-1}(\Omega)),\\
\nu_n \rightharpoonup \nu, \qquad &\text{weakly in} \quad L^2(\omega\times (0,T)).
\end{split}
\end{equation}

With an argument analogous to the proof of Theorem \ref{theorem_linear}, we can pass to the limit in \eqref{sistema3} and get
\begin{equation*}
\begin{cases}
y_t=a_1  y_{xx}+b_1 y_x + c_1 y +\lambda_1\int_0^1 J_1(\zeta-x)y(\zeta,t) d\zeta + a(\bar y,\bar z)y + b(\bar y,\bar z)z + \nu 1_\omega, \ \ &\mbox{ in } Q,\\
z_t=a_2  z_{xx}+b_2 z_x +c_2 z +\lambda_2\int_0^1J_2(\zeta-x)z(\zeta,t)d\zeta+ c(\bar y,\bar z)y + d(\bar y,\bar z)z , \ \ &\mbox{ in } Q,\\
y(0,t) = y(1,t)=z(0,t) = z(1,t)  = 0 ,\ \ 
\mbox{ for } 0<t<T,\\
y(x,0) = y_0(x), z(x,0) = z_0(x), \mbox{ in }  \Omega,
\end{cases}
\end{equation*}
with $y(T)=0$, $z(T)=0$ and $\|\nu\| \leq C(\|y_0\|_{L^2(\Omega)}+\|z_0\|_{L^2(\Omega)})$. 

In the present case, we complete the demonstration using the Theorem of Aubin-Lions. We have $W \subset L^2(Q)$ compactly, and there is a subsequence $(y_n,z_n)$, such that
$$
(y_n,z_n) \to (y,z), \quad \text{strongly in } L^2(Q)\times L^2(Q) = \X,
$$
i.e. $(y,z) \in \Phi(\bar y,\bar z)$ and $\mathbf{\Phi(\bar y,\bar z)}$ \textbf{is compact.}

By the Theorem \ref{theorem_linear}, we have that $\mathbf{\Phi(\bar y,\bar z)}$ \textbf{is nonempty.}

Now we prove that $\Phi(B) \subset B$. If $R>0$ is fixed, let $(\bar y,\bar z) \in B$ and $(y,z) \in \Phi(B)$.
If we choose $\delta>0$ sufficiently small, depending on $T$, such that $\|y_0\|_{L^2(\Omega)} + \|z_0\|_{L^2(\Omega)} < \delta$, then \textbf{we get that} $\mathbf{\Phi(B) \subset B}$, by the energy estimates \eqref{eq:energy_est_L2}.

Now we prove that that graph of $\Phi$ is closed.
Let us consider the sequence $((\bar y_n,\bar z_n),(y_n,z_n)) \in Graph(\Phi)$, such that $((\bar y_n,\bar z_n),(y_n,z_n)) \to ((\bar y,\bar z),( y, z))$ in $\X \times \X$. That is, the functions satisfy the system below:
\begin{equation}\label{sistema_aproximado}
\begin{cases}
y_{n,t}=a_1  y_{n,xx}+b_1 y_{n,x} + c_1 y_n +\lambda_1\int_0^1 J_1(\zeta-x)y_n(\zeta,t) d\zeta +  a(\bar y_n,\bar z_n)y_n +  b(\bar y_n,\bar z_n)z_n + \nu_n 1_\omega, \ \ &\mbox{ in } Q,\\
z_{n,t}=a_2  z_{n,xx}+b_2 z_{n,x} +c_2z_n +\lambda_2\int_0^1J_2(\zeta-x)z_n(\zeta,t)d\zeta+  c(\bar y_n,\bar z_n)y_n +  d(\bar y_n,\bar z_n)z_n, \ \ &\mbox{ in } Q,\\
y_n(0,t) = y_n(1,t)=z_n(0,t) = z_n(1,t)  = 0 ,\ \ 
\mbox{ for } 0<t<T,\\
y_n(x,0) = y_0(x), z_n(x,0) = z_0(x), \mbox{ in }  \Omega,
\end{cases}
\end{equation}
and we have that 
$\nu_n\in L^2(\omega\times(0,T)) \text{, } (y_n,z_n) \text{ is solution of the system with }  \ y_n(T)=0, \ z_n(T)=0  \text{ and } \|\nu_n\|_{L^2(\omega\times(0,T))} \leq C (\|y_0\|_{L^2(\Omega)} + \|z_0\|_{L^2(\Omega)})$.

As above we have the energy estimate  \eqref{eq:energy_est_L2}, because the initial conditions of the system are fixed for every sequence of solutions and the controls are uniformly limited by the norm of these initial conditions. Thus, there is a subsequence also denoted $(y_n,z_n)$ such that
$$
y_n \rightharpoonup y, \qquad z_n \rightharpoonup z, \quad \text{weakly in } L^2(0,T,H^1_0(\Omega)),  
$$
$$
y_{n,t} \rightharpoonup  y_t, \qquad z_{n,t} \rightharpoonup z_t, \quad \text{weakly in } L^2(0,T,H^{-1}(\Omega)).  
$$

We also have for a subsequence, due to $((\bar y_n,\bar z_n),(y_n,z_n)) \to ((\bar y,\bar z),( y,  z))$ in $\X \times \X$,
$$
\bar y_n \to \bar y, \qquad y_n \to   y, \qquad \bar z_n \to \bar z, \qquad z_n
\to   z.
$$
almost everywhere in $Q$. So, considering that $a$ is a continuous function, we have:

$$
a(\bar y_n,\bar z_n)y_n \to a(\bar y,\bar z) y, \quad \text{almost everywhere} \quad \Omega\times (0,T).
$$

On the other hand, we have that $\int_Q |a(\bar y_n,\bar z_n)y_n|^2 \leq M^2 \int_Q  |y_n|^2 \leq C$. Thus, by the Lemma of Lions, we have 
$$
a(\bar y_n,\bar z_n)y_n \to a(\bar y,\bar z)y \quad \text{in} \quad L^2(Q).
$$

We estimate analogously the other terms. 
$$
b(\bar y_n,\bar z_n)y_n \to b(\bar y,\bar z)y \quad \text{in} \quad L^2(Q).
$$
$$
c(\bar y_n,\bar z_n)y_n \to c(\bar y,\bar z)y \quad \text{in} \quad L^2(Q).
$$
$$
d(\bar y_n,\bar z_n)y_n \to d(\bar y,\bar z)y \quad \text{in} \quad L^2(Q).
$$

Moreover, observing that $\|\nu_n\|_{L^2(\omega\times(0,T))} \leq C (\|y_0\|_{L^2(\Omega)} + \|z_0\|_{L^2(\Omega)})$, we also have
$\nu_n \rightharpoonup \nu$ weakly in $L^2(\omega \times (0,T))$.

Then, we can pass the limit in the system \eqref{sistema_aproximado}. We have
$$y_n(x,0)=y_0(x), \quad z_n(x,0)=z_0(x), \quad y_n(x,T)=0, \quad z_n(x,T)=0.$$

With the same argument as at the end of the proof of Theorem \ref{theorem_linear} we have
$$y(x,0)=y_0(x), \quad z(x,0)=z_0(x), \quad y(x,T)=0, \quad z(x,T)=0.$$

Thus, we have
\begin{equation*}
\begin{cases}
y_{t}=a_1  y_{xx}+b_1 y_{x} + c_i y +\lambda_1\int_0^1 J_1(\zeta-x)y(\zeta,t) d\zeta + a(\bar y,\bar z)y + b(\bar y,\bar z)z + \nu 1_\omega, \ \ &\mbox{ in } Q,\\
z_{t}=a_2  z_{xx}+b_2 z_{x} +c_2 z +\lambda_2\int_0^1J_2(\zeta-x)z(\zeta,t)d\zeta+ c(\bar y,\bar z)y + d(\bar y,\bar z)z, \ \ &\mbox{ in } Q,\\
y(0,t) = y(1,t)=z(0,t) = z(1,t)  = 0 ,\ \ 
\mbox{ for } 0<t<T,\\
y(x,0) = y_0(x), z(x,0) = z_0(x), \mbox{ in }  0 < x < 1,
\end{cases}
\end{equation*}
and $\|\nu\|_{L^2(\omega\times (0,T))} \leq \liminf \|\nu_n\| \leq C(\|y_0\|_{L^2(Q)} + \|z_0\|_{L^2(Q)})$.

This shows that $(y,z) \in \Phi(\bar y,\bar z)$, i.e., $\mathbf{\Phi}$ \textbf{has closed graph}.

Thus, by Kakutani's theorem, there exists a fixed point $(y,z) \in \Phi(y,z)$ that satisfies the system \eqref{system_fixed_point} and
$y(T)=0$, $z(T)=0$, with control $\nu \in L^2(\omega \times (0,T))$ such that
$$
\|\nu\|_{L^2(\omega \times (0,T)} \leq C_1(\|y_0\|_{L^2(\Omega)} + \|z_0\|_{L^2(\Omega}).
$$

\end{proof}

\section{Related problems and additional comments}
\label{sec:add_coments}
\begin{itemize}

\item As pointed in Remark \ref{2_controles}, the intermediate Carleman solves the problem with two controls.

\item The adaptation of the case of boundary controls is standard. For instance, for the system \eqref{sistema0} we look for boundary controls $h_1$ and $h_2$ such that the system
\begin{equation*} 
\begin{cases}
u_t=a_1  u_{xx}+b_1 u_x + c_1 u +\lambda_1\int_0^1 J_1(z-x)u(z,t) dz + F(u,v), \ \ &\mbox{ in } Q,\\
v_t=a_2  v_{xx}+b_2v_x +c_2v +\lambda_2\int_0^1J_2(z-x)v(z,t)dz+G(u,v), \ \ &\mbox{ in } Q,\\
u(0,t) = v(0,t) = 0, \ \ \mbox{ for } 0<t<T, \\
u(1,t) = h_1(t), \quad v(1,t)  = h_2(t) ,\ \ \mbox{ for } 0<t<T,\\
u(x,0) = u_0(x), v(x,0) = v_0(x), \mbox{ in }  \Omega,
\end{cases}
\end{equation*}
is locally null controllable, where the other parameters are the same as in the original system \eqref{sistema0}. We solve the problem with interior control $\bar \nu$ in the extended interval $\Omega_\varepsilon = (0,1+\varepsilon)$ where the control is acting in an open set $\bar \omega \subset (1,1+\varepsilon)$. Then, the boundary controls are given by $h_1(t)=\bar u(1,t)$ and $h_2(t)=\bar v(1,t)$, where $(\bar u, \bar v)$ denote the solutions given by Theorem \ref{maintheorem0} in the extended interval.
Let us recall that the case with only one boundary control is open, even for a system of coupled heat equations.

    \item The authors are working on the following generalization: for the domains $\Omega = (0,1)$ and $Q= \Omega \times (0,T)$ we consider the null controllability of the following system of semilinearly coupled equations with kernel terms,
\begin{equation*} 
\begin{cases}
y_t=a_1  y_{xx}+b_1 y_x + c_1 y +\lambda_1(t)\int_0^1 J_1(\zeta-x)y(\zeta,t) d\zeta + P(y,z,y_x) + \nu 1_\omega, \ \ &\mbox{ in } Q,\\
z_t=a_2  z_{xx}+b_2 z_x + c_2 z +\lambda_2(t)\int_0^1 J_2(\zeta-x)z(\zeta,t)d\zeta+ Q(y,z,z_x), \ \ &\mbox{ in } Q,\\
y(0,t) = y(1,t) = z(0,t) = z(1,t)  = 0 ,\ \ 
\mbox{ for } 0<t<T,\\
y(x,0) = y_0(x), z(x,0) = z_0(x), \mbox{ in }  \Omega,
\end{cases}
\end{equation*}
where $\nu$ is a control acting on an open set $\omega \subset \Omega$.

    \item Let us consider $\Omega = (0,1)$ and $Q:= \Omega \times (0,T)$. We consider the null controllability of the following system of semilinearly coupled equations with kernel terms,
\begin{equation*} 
\begin{cases}
y_t=a_1  y_{xx}+b_1 y_x + c_1 y +\lambda_1(t)\int_0^1 J_1(\zeta-x)y(\zeta,t) d\zeta + P(y,z,y_x,z_x) + \nu 1_\omega, \ \ &\mbox{ in } Q,\\
z_t=a_2  z_{xx}+b_2 z_x + c_2 z +\lambda_2(t)\int_0^1 J_2(\zeta-x)z(\zeta,t)d\zeta+ Q(y,z,y_x,z_x), \ \ &\mbox{ in } Q,\\
y(0,t) = y(1,t) = z(0,t) = z(1,t)  = 0 ,\ \ 
\mbox{ for } 0<t<T,\\
y(x,0) = y_0(x), z(x,0) = z_0(x), \mbox{ in }  0 < x < 1,
\end{cases}
\end{equation*}
where $\nu$ is a control acting on an open set $\omega \subset (0,1)$. This problem is more difficult because there are only few results and Carleman estimates in the generic case where the linearized system contains terms with both $y$ and $y_x$ or $z$ and $z_x$ and it is not possible to isolate $y$ or $z$ from one of the equations. In that case we mention \cite{Ben-Cris-Gai-Ter-14} where for a linear system, under stronger hypothesis on the coefficients and the geometry of the control set, a null controllability result was proved.

\item From both a mathematical and an applied perspective, an interesting open problem is to establish controllability for systems of $n$ equations using only $n-m$ controls, with $n,m\in\mathbb{N}$ and $n>m>0$. 
Such coupled systems also arise in financial mathematics.
\end{itemize}

\thispagestyle{plain}        

\end{document}